\newcommand{\aff}{\mathop{\rm aff}}
\def\B{{\cal B}}
\def\C{{\cal C}}
\def\F{{\cal F}}
\def\L{{\cal L}}
\def\M{{\cal M}}
\def\N{{\cal N}}
\def\P{{\cal P}}
\newcommand{\midb}{\;\middle|\;}
\def\comp{\mathop{\text{\scriptsize $\circ$}}}
\def\minimize{\mathop{\rm minimize}\limits}
\def\st{\mathop{\rm subject\ to}}
\def\dom{\mathop{\rm dom}}
\def\cl{\mathop{\rm cl}}
\def\reals{\mathbb{R}}
\def\inte{\mathop{\rm int}}
\def\ovr{\mathop{\rm over}}
\def\tos{\rightrightarrows}
\def\rinte{\mathop{\rm rint}}
\newtheorem{assumption}{Assumption}
\newtheorem{theorem}{Theorem}
\newtheorem{lemma}[theorem]{Lemma}
\newtheorem{corollary}[theorem]{Corollary}
\theoremstyle{definition}
\newtheorem{remark}[theorem]{Remark}
\title{Convex duality in nonlinear optimal transport}
\author{Teemu Pennanen\thanks{Department of Mathematics, King's College London,
Strand, London, WC2R 2LS, United Kingdom} \and 
Ari-Pekka Perkki\"o\thanks{Mathematics Institute, Ludwig-Maximilian University of Munich, Theresienstr. 39, 80333 Munich, Germany}}
\begin{document}

\maketitle

\begin{abstract}
This article studies problems of optimal transport, by embedding them in a general functional analytic framework of convex optimization. This provides a unified treatment of a large class of related problems in probability theory and allows for generalizations of the classical problem formulations. General results on convex duality yield dual problems and optimality conditions for these problems. When the objective takes the form of a convex integral functional, we obtain more explicit optimality conditions and establish the existence of solutions for a relaxed formulation of the problem. This covers, in particular, the mass transportation problem and its nonlinear generalizations.
\end{abstract}

\noindent\textbf{Keywords.} mass transport; martingale transport; Schr\"odinger problem; convex duality; integral functionals
\newline
\newline
\noindent\textbf{AMS subject classification codes.} 46N10, 46N30

\section{Introduction}

Let $S_t$, $t=0,\ldots,T$ be Polish spaces and $S=S_0\times\cdots\times S_T$. Let $M_t$ and $M$ be spaces of $\reals^d$-valued Borel measures on $S_t$ and $S$, respectively, and consider the optimization problem
\begin{equation}\label{d}\tag{D}
\minimize\quad \sum_{t=0}^TG_t^*(\lambda_t) + H^*(\lambda) \quad\ovr\ \lambda\in M,
\end{equation}
where $G^*_t$ and $H^*$ are convex functions on $M_t$ and $M$, respectively and $\lambda_t$ is the marginal of $\lambda$ on $S_t$.

The above covers a wide range of optimization problems encountered in probability theory and finance. In particular, when $T=d=1$, $G^*_t=\delta_{\{\mu_t\}}$ and\footnote{Given a set $C$, its {\em indicator function} $\delta_C$ takes the value $0$ on $C$ and $+\infty$ outside of $C$.} $H^*(\lambda)=\int_S cd\lambda+\delta_{M_+}(\lambda)$ for given $\mu_t\in M_t$ and a lower semicontinuous nonnegative function $c$, we cover the classical Monge--Kantorovich mass transportation problem. Choosing $H^*=\delta_\Lambda$ for a closed convex set $\Lambda\subset M$ of probability measures, we obtain the problem from Strassen~\cite{str65} of finding probability measures with given marginals. When $H^*(\lambda)$ is the entropy relative to a given reference measure, we recover the classical Schr\"odinger problem; see e.g.\ \cite{fg97,leo14} and the references therein. Problems where the effective domain of $H^*$ is contained in the set of martingale measures have been recently proposed in mathematical finance e.g.\ in \cite{bhp13}.

Allowing for more general choices of $G^*_t$ is relevant e.g.\ in economic applications where $\lambda_t$ is not necessarily fixed but can react to demand with an increasing marginal costs of production. In the case of finite $S$, such problems have been extensively studied in \cite{roc84}. In the financial context of \cite{bhp13}, more general convex functionals $G_t^*$ arise naturally when price quotes for derivatives come with bid-ask spreads and finite quantities. 

This paper develops a duality theory for \eqref{d} by embedding it in the general conjugate duality framework of Rockafellar~\cite{roc74}. This provides a unified treatment of a wide range of problems in deriving optimality conditions and criteria for the existence of optimal solutions. The duality approach yields simplified proofs and generalizations of many classical results in applied probability.

As examples, we extend some well-known results on the existence of probability measure with given marginals, on the Schr\"odinger problem and on model-free superhedging of financial derivatives. Our main theorem on problem \eqref{d} yields extensions of the main results of \cite{str65}, \cite{fg97} and \cite{bhp13} to models with general marginal functionals $G_t^*$. 

When the functions $G^*_t$ and $H^*$ have the additional structure of integral functionals, the optimality conditions allow for pointwise characterizations and the problem dual to \eqref{d} allows for a relaxation where the optimum is attained under fairly general conditions. Our existence results extend the existing results on the dual of the Monge--Kantorovich problem to a wider class of problems. In particular, we obtain a necessary and sufficient conditions for optimal transportation plans in mass transportation with capacity constraints. We obtain a similar result for the Schr\"odinger problem which also seems new. 

This paper combines techniques from convex analysis, measure theory and the theory of integral functionals of continuous functions. The general duality results are derived from the functional analytic framework of \cite{roc74} while the theory of integral functionals allows for a more explicit form of optimality conditions and for a relaxation of the problem dual to \eqref{d}. The generality of our setting requires an extended conjugacy theorem for integral functionals proved in the appendix. The attainment of the minimum in the dual of \eqref{d} is established by borrowing techniques from convex stochastic optimization \cite{pp12}.

\section{Conjugate duality}\label{sec:cd}

This section derives \eqref{d} as a dual problem of a convex optimization problem on a Banach space of continuous functions. In some applications it is convenient to allow for unbounded continuous functions so we will follow \cite{str65} and allow for continuous functions that become bounded when scaled by a possibly unbounded continuous function.

Given a continuous $\psi_t:S_t\to[1,\infty)$,
\[
C_t:=\{x_t\in C(S_t;\reals^d)\mid x_t/\psi_t\in C_b(S_t;\reals^d)\}
\]
is a Banach space under the norm $\|x_t\|_{C_t}:=\|x_t/\psi_t\|_{C_b(S_t;\reals^d)}$, where $C_b(S_t;\reals^d)$ is the space of bounded continuous functions with the supremum norm. The space $M_t$ of $\reals^d$-valued finite Borel measures under which $\psi_t$ is integrable may be identified with a linear subspace of the norm dual $C_t^*$ of $C_t$. Indeed, for every $\lambda_t\in M_t$,
\[
x_t\mapsto \int_{S_t} x_td\lambda_t:=\sum_{i=1}^d\int_{S_t} x^i_td\lambda^i_t=\sum_{i=1}^d\int_{S_t} x^i_t/\psi_td(\psi_t\lambda^i_t)
\]
is a continuous linear functional on $C_t$. If $S_t$ is compact, then Riesz representation (see e.g.\ \cite[Theorem~7.10.4]{bog7}) implies that $C_t^*=M_t$ but, in general, the inclusion $M_t\subseteq C_t^*$ may be strict. Similarly, defining
\[
\psi(s):=\sum_{t=0}^T\psi_t(s_t),
\]
the space $M$ of finite $\reals^d$-valued Borel measures on $S$ under which $\psi$ is integrable is a linear subspace of the Banach dual of
\[
C:=\{u\in C(S;\reals^d)\mid u/\psi\in C_b(S;\reals^d)\}.
\]
When $\psi_t$ are bounded, we have $C_t=C_b(S_t;\reals^d)$ and $C=C_b(S;\reals^d)$ the duals of which contain all finite $\reals^d$-valued Borel measures on $S_t$ and $S$, respectively.


Let $G_t$ be a proper convex function on $C_t$, $t=0,\ldots,T$, let $H$ be a proper convex function on $C$, and consider the problem
\begin{equation*}\label{p}\tag{P}
\begin{alignedat}{2}
&\minimize\quad & & \sum_{t=0}^TG_t(x_t) + H\left(-\sum_{t=0}^Tx_t\comp \pi_t\right)\quad\ovr\ x\in\prod_{t=0}^T C_t,
\end{alignedat}
\end{equation*}
where $x=(x_t)_{t=0}^T$ and $\pi_t(s):=s_t$. The general duality results below depend on the properties of the {\em optimum value function}. 
\[
\varphi(u) := \inf_x\left\{\sum_{t=0}^TG_t(x_t) + H\left(u-\sum_{t=0}^Tx_t\comp \pi_t\right)\right\}
\]
defined on $C$. 

Throughout, we will endow the dual space $C^*$ of $C$ by the weak*-topology. The spaces $C$ and $C^*$ are then in separating duality under the natural bilinear form
\[
\langle u,\lambda\rangle:=\lambda(u).
\]
Similarly for $C_t^*$. It turns out that the conjugate 
\[
\varphi^*(\lambda):=\sup_{u\in C}\{\langle u,\lambda\rangle-\varphi(u)\}
\]
of $\varphi$ can be expressed as
\[
\varphi^*(\lambda)=\sum_{t=0}^T G_t^*(\lambda_t) + H^*(\lambda),
\]
where $G_t^*$ is the conjugate of $G_t$, $H^*$ is the conjugate of $H$ and $\lambda_t\in C_t^*$ denotes the continuous linear functional $x_t\mapsto\langle x_t\comp \pi_t,\lambda\rangle$ on $C_t$, the $t$-th {\em marginal} of $\lambda$. 

The infimum of $\varphi^*$ over $C^*$ equals $-\varphi^{**}(0)$ so if $\varphi$ is lower semicontinuous and the optimum value $\inf\eqref{p}$ of \eqref{p} is finite, then the biconjugate theorem implies that $-\inf\eqref{p}$ equals the optimum value of
\begin{equation}\label{dr}\tag{DR}
\minimize\quad \sum_{t=0}^T G_t^*(\lambda_t) + H^*(\lambda)\quad\ovr\ \lambda\in C^*.
\end{equation}
This may be viewed as a ``relaxation'' of \eqref{d} from the space $M$ of Borel measures to all of $C^*$. Clearly, if $\dom\varphi^*\subseteq M$, then \eqref{dr} coincides with \eqref{d}. The following lemma gives a sufficient condition for this. It is a simple extension of \cite[Lemma~4.10]{leo6} that was formulated for $T=1$ and $\psi_t\equiv 1$.

\begin{lemma}\label{lem:radon}
If $\dom\varphi^*\subset C^*_+$ and $\dom G_t^*\subseteq M_t$ for each $t=0,\ldots,T$, then $\dom\varphi^*\subseteq M$.
\end{lemma}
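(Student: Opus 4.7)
The plan is to reduce the lemma to the Daniell--Alexandrov representation theorem for positive linear functionals on $C_b(S)$, $S$ Polish, after rescaling by the weight $\psi$. Given $\lambda\in\dom\varphi^*$, I would work componentwise via the positivity assumption $\lambda\in C^*_+$ to assume $d=1$, and then define the auxiliary positive functional $\tilde\lambda(f):=\lambda(\psi f)$ on $C_b(S)$. This is well-defined because $\psi f\in C$ (as $(\psi f)/\psi=f\in C_b$), and finite since $\tilde\lambda(1)=\lambda(\psi)=\sum_t\int\psi_t\,d\lambda_t<\infty$. Once $\tilde\lambda$ is represented by a finite (hence Radon, since $S=\prod_t S_t$ is Polish) Borel measure $\tilde\mu$, the measure $\mu:=\psi^{-1}\tilde\mu$ satisfies $\int\psi\,d\mu=\tilde\mu(S)<\infty$ and $\lambda(u)=\int u\,d\mu$ for every $u\in C$, so that $\lambda\in M$.

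To apply Daniell--Alexandrov, it suffices to verify that $\tilde\lambda$ is $\sigma$-smooth: $f_n\downarrow 0$ pointwise in $C_b(S;[0,M])$ implies $\tilde\lambda(f_n)\to 0$. Given $\varepsilon>0$, for each $t$ pick a compact $K_t\subseteq S_t$ with $\int_{S_t\setminus K_t}\psi_t\,d\lambda_t<\varepsilon$ (using Radonness of $\lambda_t$ and integrability of $\psi_t$), and then use Urysohn's lemma in the metric Polish space $S_t$ to pick $\tau_t\in C_b(S_t;[0,1])$ with $\tau_t=1$ on $K_t$. Set $\eta:=\prod_t\tau_t\circ\pi_t$ and $K:=\prod_t K_t$, compact in $S$. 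The decomposition $\tilde\lambda(f_n)\le\tilde\lambda(f_n\eta)+M\lambda(\psi(1-\eta))$ reduces the task to bounding each piece smallly.

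To bound $\lambda(\psi(1-\eta))$, expand $\psi=\sum_t\psi_t\circ\pi_t$ and use $1-\eta\le\sum_s(1-\tau_s\circ\pi_s)$, which splits the quantity into diagonal pieces $\int\psi_t(1-\tau_t)\,d\lambda_t<\varepsilon$ and cross pieces $\lambda((\psi_t\circ\pi_t)(1-\tau_s\circ\pi_s))$ for $s\ne t$. The main obstacle lies in the cross pieces, where the trivial bound $\int\psi_t\,d\lambda_t$ is only finite, not small. The resolution is a truncation trick: choose $R_t$ with $\int(\psi_t-R_t)^+\,d\lambda_t<\varepsilon$ (possible because $\psi_t\in L^1(\lambda_t)$) and refine the choice of $K_s$ so that $R_t\lambda_s(S_s\setminus K_s)<\varepsilon$; the decomposition $\psi_t\le R_t+(\psi_t-R_t)^+$ then gives
\[
\lambda((\psi_t\circ\pi_t)(1-\tau_s\circ\pi_s))\le R_t\int(1-\tau_s)\,d\lambda_s+\int(\psi_t-R_t)^+\,d\lambda_t\le 2\varepsilon.
\]
Summing over $s,t$ yields $\lambda(\psi(1-\eta))\le C_T\varepsilon$ for a constant depending only on $T$. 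The remaining piece $\tilde\lambda(f_n\eta)$ is controlled by combining Dini's theorem on $K$ (where $\eta=1$, so $f_n<\varepsilon$ on $K$ for $n$ large) with the same truncation/tightness scheme applied to the residual function $\psi\eta(f_n-\varepsilon)^+$, which vanishes on $K$ for $n$ large; this yields $\tilde\lambda(f_n)\le C'\varepsilon$ for $n$ sufficiently large, establishing $\sigma$-smoothness and completing the proof.
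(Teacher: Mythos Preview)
Your approach is correct but considerably more involved than the paper's. The paper applies a tightness criterion (Bogachev, Theorem~7.10.6) directly to the restriction $\lambda|_{C_b}$: viewing the positive functional $\lambda$ as a finitely additive set function, for $u\in C_b$ vanishing on $K=\prod_t K_t$ one has
\[
|\langle u,\lambda\rangle|\le\|u\|\,\lambda(K^c)\le\|u\|\sum_t\lambda_t(K_t^c),
\]
and choosing $K_t$ with $\lambda_t(K_t^c)<\varepsilon/(T+1)$ finishes the argument in one line. Crucially, because the criterion is applied on $C_b$ \emph{without} the $\psi$-rescaling, the weight never enters and your cross terms $\lambda((\psi_t\circ\pi_t)(1-\tau_s\circ\pi_s))$ simply do not arise.

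Your $\sigma$-smoothness route via $\tilde\lambda(f)=\lambda(\psi f)$ is a legitimate alternative and has the pleasant feature that, once $\tilde\mu$ is obtained, both the $\psi$-integrability of $\mu=\psi^{-1}\tilde\mu$ and the representation $\lambda(u)=\int u\,d\mu$ on all of $C$ (not just $C_b$) are automatic; the paper leaves this extension implicit. However, the claim that the ``same truncation/tightness scheme'' handles the Dini residual $\psi\eta(f_n-\varepsilon)^+$ is not quite right as written: unlike $1-\eta$, the function $g_n:=\eta(f_n-\varepsilon)^+$ admits no pointwise bound by a sum $\sum_s w_s\circ\pi_s$ of one-variable \emph{continuous} functions that is small in the relevant sense, so the scheme cannot be run inside $C$ alone. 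To make it work you must pass to the finitely additive extension of $\lambda$ (so that $g_n\le M\sum_s 1_{K_s^c}\circ\pi_s$ becomes usable), which is precisely what the paper invokes; once that step is granted, your truncation argument goes through. In short, both proofs ultimately rest on the same finitely additive representation, but the paper's tightness formulation bypasses Dini, bump functions, and the truncation trick entirely.
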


\begin{proof}
By \cite[Theorem 7.10.6]{bog7}, $\lambda\in \dom\varphi^*$ is a Radon measure (since $S$ is Polish, this is equivalent to being a Borel measure \cite[Theorem 7.1.7]{bog7}) if and only if, for every $\epsilon>0$, there exists a compact $K\subset S$ such that if $u\in C_b$ is zero on $K$, then $|\langle u,\lambda\rangle| \le \epsilon \|u\|$.

Let $\epsilon>0$. By assumption, $\lambda_t\in M_t$ and they are nonnegative since $\dom\varphi^*\subset C^*_+$. By \cite[Theorem~7.1.7]{bog7}, there exist compact sets $K_t$ such that $\lambda_t(K_t^C)<\epsilon/(T+1)$. Let $u \in C_b$ be zero on $\prod K_t$. Since $\lambda$ is an additive set function, and $|u|\le 1_{(\prod K_t)^C}\|u\|_{C_b}$, 
\begin{align*}
|\langle u, \lambda \rangle | &\le \int 1_{(\prod K_t)^C}\|u\| d\lambda\\
&= \lambda\left(\bigcup \pi_t^{-1}(K_t)^C\right)\|u\|\\
&\le  \sum_t \lambda \left(\pi_t^{-1}(K_t)^C\right)\|u\|\\
&\le \sum_t \lambda_t\left(K_t^C\right)\|u\|\\
&= \epsilon\|u\|
\end{align*}
which completes the proof.
\end{proof}

The set of relaxed dual solutions coincides with the {\em subdifferential} $\partial\varphi(0)$ of $\varphi$ at the origin. If $\partial\varphi(0)$ is nonempty, then $\varphi$ is closed at the origin and there is no duality gap. The following result gives a sufficient condition for the existence in \eqref{dr}. It involves the domain
\[
\dom\varphi = \dom H + \{\sum_{t=0}^Tx_t\comp\pi_t\,|\,x_t\in\dom G_t\}
\]
of the optimum value function of \eqref{p} 

\begin{theorem}\label{thm:dual}
If $G_t$ and $H$ be proper lsc functions such that the set
\[
\bigcup_{\alpha>0}\alpha\dom\varphi
\]
is a nonempty closed linear subspace of $C$, then the optimum in \eqref{dr} is attained, there is no duality gap and an $x$ solves \eqref{p} if and only if there is a $\lambda\in C^*$ such that
\begin{align*}
  \partial G_t(x_t) &\ni \lambda_t,\quad t=0,\ldots,T,\\
  \partial H(-\sum_{t=0}^Tx_t\comp\pi_t) &\ni \lambda,
\end{align*}
and then $\lambda$ solves \eqref{dr}.
\end{theorem}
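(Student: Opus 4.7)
My plan is to recast \eqref{p} as a Fenchel--Rockafellar problem with a linear operator and apply the Attouch--Brezis theorem, whose constraint qualification is built into the hypothesis of the theorem.

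First I would set $A:\prod_t C_t\to C$ by $Ax:=\sum_{t=0}^T x_t\comp\pi_t$, which is bounded linear since $\psi\ge\psi_t$ gives $\|Ax\|_C\le\sum_t\|x_t\|_{C_t}$, and define $G(x):=\sum_t G_t(x_t)$ on the product Banach space. Problem \eqref{p} then reads $\min_x\{G(x)+H(-Ax)\}$. A routine calculation shows the adjoint $A^*:C^*\to\prod_t C_t^*$ sends $\lambda\in C^*$ to the tuple $(\lambda_0,\ldots,\lambda_T)$ of marginals defined in the excerpt, and by separability $G^*(A^*\lambda)=\sum_t G_t^*(\lambda_t)$; this identifies the Fenchel--Rockafellar dual value of \eqref{p} with $-\inf\eqref{dr}$.

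The crucial step is the constraint qualification. As already recorded in the excerpt, $\dom\varphi=\dom H+A\dom G$, so the hypothesis that $\bigcup_{\alpha>0}\alpha\dom\varphi$ is a nonempty closed linear subspace of $C$ is exactly the Attouch--Brezis CQ for the pair $\bigl(G,\,H\comp(-A)\bigr)$ on the product Banach space. With this CQ in force, the Attouch--Brezis theorem yields
\begin{equation*}
-\inf\eqref{p}=\bigl(G+H\comp(-A)\bigr)^{*}(0)=\inf_{\lambda\in C^*}\{G^*(A^*\lambda)+H^*(\lambda)\}=\inf\eqref{dr},
\end{equation*}
with the infimum attained; equivalently, there is no duality gap and \eqref{dr} admits a minimizer.

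For the optimality conditions, the subdifferential sum rule furnished by the same CQ gives $\partial\bigl(G+H\comp(-A)\bigr)(x)=\partial G(x)-A^{*}\partial H(-Ax)$. Hence $0$ lies in this subdifferential if and only if there exists $\lambda\in C^{*}$ with $\lambda\in\partial H(-Ax)$ and $A^{*}\lambda\in\partial G(x)$. The separability of $G$ unpacks the second inclusion as $\lambda_t\in\partial G_t(x_t)$ for each $t$, which gives the stated optimality system. Combined with no duality gap, any such $\lambda$ solves \eqref{dr}, proving both directions of the iff.

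The main obstacle is identifying the hypothesis with the Attouch--Brezis CQ; once that link is made, the absence of a duality gap, the dual attainment, the sum rule and hence the optimality conditions all fall out of a single application of the theorem. The remaining work is bookkeeping: checking that $A$ is bounded linear, computing the adjoint $A^{*}$ explicitly in terms of marginals, and verifying that $G$ and $H\comp(-A)$ are proper convex lsc on the relevant product Banach space.
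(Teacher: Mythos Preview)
Your proposal is correct and follows essentially the same route as the paper. The paper embeds \eqref{p} in Rockafellar's conjugate duality framework via the perturbation $F(x,u)=\sum_tG_t(x_t)+H(u-Ax)$, computes $\varphi^*(\lambda)=\sum_tG_t^*(\lambda_t)+H^*(\lambda)$ explicitly, and then uses Z\u{a}linescu's Theorems~2.7.1(vii) and~2.4.12 (the closed-subspace hypothesis gives continuity of $\varphi$ at $0$ relative to $\aff\dom\varphi$, hence $\partial\varphi(0)\ne\emptyset$) together with Rockafellar's Theorems~15--16; this is exactly the Attouch--Br\'ezis/Fenchel--Rockafellar argument you invoke, with the identification $\dom\varphi=A\dom G+\dom H$ making the two constraint qualifications coincide.
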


\begin{proof}
Problem \eqref{p} fits the conjugate duality framework of \cite{roc74} with $X=\prod_{t=0}^T C_t$, $U=C$ and
\[
F(x,u) := \sum_{t=0}^TG_t(x_t) + H\left(u-\sum_{t=0}^Tx_t\comp \pi_t\right).
\]
The associated {\em Lagrangian} $L$ is the convex-concave function defined for each $x\in\prod_{t=0}^TC_t$ and $\lambda\in M$ by 
\begin{align*}
  L(x,\lambda) &:= \inf_u\{F(x,u) - \langle u,\lambda\rangle\}\\
&= \sum_{t=0}^TG_t(x_t) - \sum_{t=0}^T\langle x_t\comp \pi_t,\lambda\rangle - H^*(\lambda)\\
&= \sum_{t=0}^TG_t(x_t) - \sum_{t=0}^T\langle x_t,\lambda_t\rangle - H^*(\lambda).
\end{align*}
The conjugate of $F$ can thus be expressed for each $\theta\in\prod_tM_t$ and $\lambda\in M$ as
\begin{align*}
F^*(\theta,\lambda) &= \sup_x\{\langle x,\theta\rangle - L(x,\lambda)\}\\
&= \sup_x\{\sum_{t=0}^T\langle x_t,\theta_t\rangle - \sum_{t=0}^TG_t(x_t) + \sum_{t=0}^T\langle x_t,\lambda_t\rangle + H^*(\lambda)\}\\
&= \sum_{t=0}^T G_t^*(\lambda_t+\theta_t) + H^*(\lambda).
\end{align*}
Thus 
\[
\varphi^* (\lambda) = F^*(0,\lambda) = \sum_{t=0}^T G_t^*(\lambda_t) + H^*(\lambda).
\]
By \cite[Theorem~2.7.1(vii)]{zal2}, $\varphi$ is continuous at the origin relative to $\aff\dom\phi$, so $\partial\varphi(0)\ne\emptyset$ by \cite[Theorem~2.4.12]{zal2}. The claims now follow from Theorems~15 and 16 of \cite{roc74}.
\end{proof}

\begin{remark}\label{rem:cq}
The second condition in Theorem~\ref{thm:dual} holds, in particular, if $0\in\inte\dom\varphi$, which holds, in particular, if
\[
0\in \inte\dom H+\{\sum_{t=0}^Tx_t\comp\pi_t\,|\,x_t\in\dom G_t\}.
\]
In the scalar case $d=1$ this last condition holds, in particular, if $H$ is nondecreasing with $H(0)<\infty$ and there exist $x_t\in\dom G_t$ such that $\sum_{t=0}^Tx_t\comp\pi_t\ge\epsilon\psi$ for some $\epsilon>0$. This is satisfied e.g.\ in the applications of Section~\ref{sec:mt} below where $\dom G_t=C_t$ for all $t$.
\end{remark}

The general results in conjugate duality would also give sufficient conditions for the existence of primal solutions but in many applications, the primal optimum is not attained in $\prod_{t=0}^TC_t$. In Sections~\ref{sec:rel} and \ref{sec:exist} below, we will extend the domain of definition of the primal objective and give sufficient conditions for the attainment of the primal optimum in a larger space of measurable functions.

\section{Examples}\label{sec:ex}

This section illustrates the general results of Section~\ref{sec:cd} by extending three well-known results in measure theory and mathematical finance. From now on, we will use the simplified notation 
\[
\sum_{t=0}^Tx_t :=\sum_{t=0}^Tx_t\comp\pi_t.
\]

\subsection{Probability measures with given marginals}

The first application deals with the classical problem on the existence of probability measures with given marginals. The following extends the existence result of \cite{str65} by allowing for more general conditions on the marginals. As usual, the {\em support function} of a set $D$ in a locally convex space $X$ is the lower semicontinuous convex function $\sigma_D$ on the dual space $V$ of $X$ given by
\[
\sigma_D(v):=\sup_{x\in D}\langle x,v\rangle.
\]

\begin{theorem}\label{thm:str}
Let $\Lambda\subset M$ and $\Lambda_t\subset M_t$ be weakly compact and convex. There exists $\lambda\in \Lambda$ with $\lambda_t\in\Lambda_t$ if and only if
\begin{equation*}
\begin{alignedat}{2}
 \sum_{t=0}^T\sigma_{\Lambda_t}(x_t) + \sigma_{\Lambda}\left(-\sum_{t=0}^Tx_t\right)\ge 0\qquad\forall x\in \prod_{t=0}^T C_t.
\end{alignedat}
\end{equation*}
\end{theorem}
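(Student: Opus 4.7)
The plan is to recast the problem in the framework of Section \ref{sec:cd} by choosing $G_t := \sigma_{\Lambda_t}$ on $C_t$ and $H := \sigma_\Lambda$ on $C$, and then invoke Theorem \ref{thm:dual}. With this choice, problem \eqref{p} is exactly the minimization of
\[
\sum_{t=0}^T \sigma_{\Lambda_t}(x_t) + \sigma_\Lambda\Bigl(-\sum_{t=0}^T x_t\Bigr)
\]
over $x \in \prod_t C_t$, and evaluating at $x=0$ shows $\inf\eqref{p} \le 0$. So the condition in the theorem is equivalent to $\inf\eqref{p} = 0$. The easy ``only if'' direction follows immediately without appealing to Theorem \ref{thm:dual}: given $\lambda \in \Lambda$ with $\lambda_t \in \Lambda_t$, the identity $\langle x_t \comp \pi_t,\lambda\rangle = \langle x_t,\lambda_t\rangle$ gives
\[
\sigma_\Lambda\Bigl(-\sum_{t=0}^T x_t\Bigr) \ge -\sum_{t=0}^T \langle x_t,\lambda_t\rangle \ge -\sum_{t=0}^T \sigma_{\Lambda_t}(x_t).
\]

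For the nontrivial ``if'' direction, I would verify the hypotheses of Theorem \ref{thm:dual}. Since weak* compactness implies weak* boundedness, the uniform boundedness principle yields that $\Lambda_t$ and $\Lambda$ are norm-bounded in $C_t^*$ and $C^*$, respectively. Hence $G_t$ and $H$ are finite and Lipschitz, in particular proper lsc convex, with $\dom G_t = C_t$ and $\dom H = C$. Consequently $\dom \varphi = C$, so the union $\bigcup_{\alpha>0}\alpha\dom\varphi$ is trivially a closed linear subspace of $C$ and the constraint qualification is satisfied. The conjugates are computed by the bipolar theorem: since $\Lambda_t$ and $\Lambda$ are weak*-closed convex (being weak*-compact), $G_t^* = \delta_{\Lambda_t}$ on $C_t^*$ and $H^* = \delta_\Lambda$ on $C^*$.

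Theorem \ref{thm:dual} then yields no duality gap and attainment in \eqref{dr}, which here reduces to
\[
\minimize\quad \delta_\Lambda(\lambda) + \sum_{t=0}^T \delta_{\Lambda_t}(\lambda_t) \quad\ovr\ \lambda\in C^*.
\]
This takes only the values $0$ (if there is a feasible $\lambda$) or $+\infty$. Since $\inf\eqref{p} = 0$ translates via no-duality-gap into $\inf\eqref{dr} = 0$, attainment produces some $\lambda \in C^*$ lying in $\Lambda$ with marginals in every $\Lambda_t$, as required.

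The only genuine subtlety is the precise meaning of ``weakly compact'' and the verification that the conjugate of $\sigma_\Lambda$ on the weak*-dual $C^*$ is really $\delta_\Lambda$. Interpreting weak compactness as weak* compactness in $C^*$ (consistent with the topology fixed on $C^*$ earlier), weak*-closedness is automatic and the bipolar identification is standard; this is the step that must be pinned down carefully, but no deeper analytic machinery is needed beyond what Theorem \ref{thm:dual} already provides.
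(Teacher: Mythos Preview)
Your proposal is correct and follows essentially the same route as the paper: choose $G_t=\sigma_{\Lambda_t}$, $H=\sigma_\Lambda$, note that boundedness of $\Lambda$ gives $\dom\varphi=C$ so Theorem~\ref{thm:dual} applies, and identify the conjugates as $\delta_{\Lambda_t}$ and $\delta_\Lambda$ via the biconjugate theorem. You supply a few details the paper leaves implicit (the direct ``only if'' argument, the uniform boundedness step, and the reading of ``weakly compact'' as weak*-compact in $C^*$), but the architecture is the same.
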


\begin{proof}
This fits Theorem~\ref{thm:dual} with $H=\sigma_{\Lambda}$ and $G_t=\sigma_{\Lambda_t}$. Indeed, by the biconjugate theorem (see e.g.~\cite[Theorem~5]{roc74}), we then have $H^*=\delta_{\Lambda}$ and $G_t^*=\delta_{\Lambda_t}$, so the objective of \eqref{d} is simply the indicator of the set
\[
\{\lambda\in\Lambda\,|\,\lambda_t\in\Lambda_t\}.
\]
The existence is thus equivalent to the optimum value of \eqref{d} being equal to zero. Since $\Lambda$ is bounded, $\dom\varphi=C$, so the domain condition of the Theorem~\ref{thm:dual} is satisfied. Thus, there is no duality gap so $\inf\eqref{d}=0$ if and only if $\inf\eqref{p}=0$, which holds exactly when the condition in the statement holds.
\end{proof}

When $T=d=1$, $\Lambda$ is a subset of probability measures and $\Lambda_t=\{\mu_t\}$ for given probability measures $\mu_t$ on $S_t$, Theorem~\ref{thm:str} reduces to Theorem~7 of \cite{str65}.

\subsection{Schr\"odinger problem}\label{ssec:sch0}

Let $d=1$ and let $R\in M$ and $\mu_t\in M_t$ be probability measures. The associated {\em Schr\"odinger problem} is the convex minimization problem
\begin{equation*}\label{sp}
\begin{alignedat}{2}
&\minimize\quad & & \int_S\ln(d\lambda/dR)d\lambda \quad\ovr\ \lambda\in M_+(S)\\
&\st\quad & &\lambda\ll R,\quad\lambda_t=\mu_t\quad t=0,\ldots,T.
\end{alignedat}
\end{equation*}
Such problems have been extensively studied in the literature; see e.g.~\cite{csi75} and the references there. 

This fits the format of \eqref{p} with $G_t(x_t)=\int_{S_t}x_td\mu_t$ and
\[
H(u) = \ln\int_Se^udR.
\]
Indeed, $H$ is proper convex lsc function with the conjugate
\[
H^*(\lambda) =
\begin{cases}
\int_S\ln(d\lambda/dR)d\lambda & \text{if $\lambda\in\P$,}\\
+\infty & \text{otherwise},
\end{cases}
\]
where $\P\subset M$ is the set of probability measures. The expression of the conjugate is derived e.g.\ in \cite[Section~3]{roc71} under the assumption that $S$ is a compact Hausdorff space and $\psi=1$. Combined with with Theorem~\ref{thm:if} below, the same argument works in the case of Polish $S$ and general $\psi$.

Allowing for general proper lsc convex $G_t$, gives rise to the following generalized formulation of the Schr\"odinger problem
\begin{equation}\label{spg}
\begin{alignedat}{2}
&\minimize\quad & & \sum_{t=0}^TG_t^*(\lambda_t) + \int_S\ln(d\lambda/dR)d\lambda \quad\ovr\ \lambda\in M_+(S)\\
&\st\quad & &\lambda\ll R.
\end{alignedat}
\end{equation}
This allows for situations where the marginals are not known exactly. Theorem~\ref{thm:dual} combined with Remark~\ref{rem:cq} gives the following.

\begin{theorem}\label{thm:spd}
Assume that there exist $x_t\in\dom G_t$ such that $\sum_{t=0}^Tx_t\ge\epsilon\psi$ for some $\epsilon>0$. Then the optimum in \eqref{spg} is attained and the optimum value coincides with the negative of the optimum value of
\[
\minimize\quad \sum_{t=0}^TG_t(x_t) + \ln\int_S\exp\left(-\sum_{t=0}^Tx_t\right)dR\quad\ovr\ x\in\prod_{t=0}^TC_t.
\]
\end{theorem}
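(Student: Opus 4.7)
The plan is to show that Theorem~\ref{thm:spd} is a direct instance of Theorem~\ref{thm:dual} via Remark~\ref{rem:cq}. Set $H(u):=\ln\int_S e^u\,dR$ together with the given $G_t$, so that the primal problem displayed in the theorem is exactly \eqref{p}. The conjugate formula quoted in the text, $H^*(\lambda)=\int_S\ln(d\lambda/dR)d\lambda$ if $\lambda\in\P$ with $\lambda\ll R$ and $+\infty$ otherwise, extends from the compact case of \cite{roc71} to our Polish $S$ and general $\psi$ by the appendix Theorem~\ref{thm:if}, which I would simply cite. In particular $\dom H^*\subseteq\P\subseteq M$, so $\dom\varphi^*\subseteq\dom H^*\subseteq M$, and the relaxed dual \eqref{dr} over $C^*$ coincides term by term with the concrete problem \eqref{spg} over $M_+$; no appeal to Lemma~\ref{lem:radon} is needed.

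Next I would verify the qualification hypothesis of Theorem~\ref{thm:dual} through the scalar ($d=1$) version of Remark~\ref{rem:cq}. The map $u\mapsto H(u)$ is nondecreasing by monotonicity of the exponential, $H(0)=\ln R(S)=0$ since $R$ is a probability measure, and the remaining point, the existence of $x_t\in\dom G_t$ with $\sum_t x_t\ge\epsilon\psi$, is exactly the standing assumption of the theorem. Therefore $0\in\inte\dom\varphi$, whence $\bigcup_{\alpha>0}\alpha\dom\varphi=C$, and Theorem~\ref{thm:dual} applies.

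Theorem~\ref{thm:dual} then delivers both conclusions at once: the optimum in \eqref{dr} is attained and there is no duality gap, so $\inf\eqref{dr}=-\inf\eqref{p}$. Read through the identifications above, this is attainment in \eqref{spg} together with equality of its optimum value and the negative of the optimum value of the primal problem displayed in the theorem, which is the claim. The only non-bookkeeping step is the conjugacy formula for $H$ on $C^*$, which is the substantive technical input and is precisely what the appendix Theorem~\ref{thm:if} is designed to provide; everything else is a mechanical translation into the abstract framework of Section~\ref{sec:cd}.
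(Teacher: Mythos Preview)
Your proposal is correct and follows the paper's approach exactly: the paper's proof is literally the sentence ``Theorem~\ref{thm:dual} combined with Remark~\ref{rem:cq} gives the following,'' and you have faithfully unpacked the bookkeeping behind it. The only quibble is that $H(u)=\ln\int_S e^u\,dR$ is not itself an integral functional in the sense of Theorem~\ref{thm:if}, so the conjugate formula (and hence $\dom H^*\subseteq\P$) comes via Rockafellar's reduction in \cite{roc71} to the integral functional $u\mapsto\int_S e^u\,dR$, to which Theorem~\ref{thm:if} then applies---but the paper glosses this point in the same way you do.
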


When $T=1$ and $G_t(x_t)=\int_{S_t}x_td\mu_t$, we recover the dual of the Schr\"odinger problem studied in \cite{leo14}. In Section~\ref{ssec:sch} below, we will associate \eqref{sp} with another dual problem for which the optimum is attained. This yields necessary and sufficient conditions for the minimizers of the Schr\"odinger problem. This provides a duality proof of the optimality conditions given in \cite[Theorem~3.43]{fg97}.

\subsection{Model-independent superhedging}

Let $d=1$, $S_t=\reals^n$ and $\psi_t(s_t)=1+|s_t|$ for all $t$ and $H=\delta_{\C_{\hat u}}$, where
\[
\C_{\hat u} := \{u\in C\,|\,\exists z\in\N:\ \hat u(s)+u(s)\le \sum_{t=0}^{T-1}z_t(s^t)\cdot\Delta s_{t+1}\}
\]for an upper semicontinuous function $\hat u$ and
\[
\N:=\{(z_t)_{t=0}^{T-1}\,|\,z_t\in\L^\infty(S^t;\reals^n)\quad t=0,\ldots,T\},
\]
where $S^t:=S_0\times\cdots\times S_t$. Problem \eqref{p} becomes 
\begin{equation}\label{sssh}
\begin{alignedat}{2}
&\minimize\quad & & \sum_{t=0}^TG_t(x_t) \quad\text{over}\quad x\in\prod_{t=0}^TC_t,\ z\in\N\\
&\st & & \quad \hat u(s) \le \sum_{t=0}^{T-1}z_t(s^t)\cdot\Delta s_{t+1} + \sum_{t=0}^Tx_t(s_t)\quad\forall s.
\end{alignedat}
\end{equation}
This can be interpreted as a problem of optimal superhedging $\hat u$ in a financial market where $G_t$ gives the cost of buying an $s_t$-dependent cash-flow $x_t$ paid out at time $t$ and the sum involving $z$ represents the gains from a self-financing trading strategy described by $z$. When
\[
G_t(x_t)=\int_{S_t}x_td\mu_t
\]
for given probability measures $\mu_t$, we recover the superhedging problem studied in \cite{bhp13}. Nonlinear functions $G_t$ arise naturally in practice where one faces bid-ask spreads and price quotes are available only for finite quantities.


We will denote the set of nonnegative {\em martingale measures} by
\[
\M:=\{\lambda\in M_+\,|\, \int_S\sum_{t=0}^Tz_t(s^t)\cdot\Delta s_{t+1}d\lambda = 0\quad\forall z\in\N\}.
\]

\begin{lemma}
Assume that $\hat u\le K\psi$ for some $K\in\reals$. Then for $\lambda\in M$, the conjugate of $H$ can be expressed as
\[
H^*(\lambda) =
\begin{cases}
  -\int_S\hat ud\lambda & \text{if $\lambda\in\M$},\\
  +\infty & \text{otherwise}.
\end{cases}
\]
\end{lemma}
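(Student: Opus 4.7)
The strategy is to compute $H^*(\lambda) = \sup_{u \in \C_{\hat u}} \int u \, d\lambda$ by a direct case analysis, writing $I(z) := \sum_{t=0}^{T-1} z_t \cdot \Delta s_{t+1}$ for brevity. When $\lambda \in \M$ (so $\lambda \ge 0$ and a martingale measure), integrating the pointwise bound $u \le I(z) - \hat u$ against $\lambda$ and killing $\int I(z)\,d\lambda$ via the martingale property gives $H^*(\lambda) \le -\int \hat u \, d\lambda$. For the matching lower bound, I approximate $-\hat u$ from below by continuous functions: since $-\hat u$ is lsc with $-\hat u \ge -K\psi$, the Lipschitz regularizations $f_k(s) := \inf_{s'}\{-\hat u(s') + k \rho(s,s')\}$ (with $\rho$ a Euclidean-type metric on $S$ and $k$ large enough) lie in $C$ (bounded below by $-K\psi$ and growing at most linearly in $\psi$) and satisfy $f_k \uparrow -\hat u$ pointwise. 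Each $f_k$ lies in $\C_{\hat u}$ with witness $z=0$, so monotone convergence (using $\psi \in L^1(\lambda)$ to dominate from below) yields $\int f_k \, d\lambda \to -\int \hat u \, d\lambda$.

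When $\lambda \in M$ is not nonnegative, the key observation is that $\C_{\hat u} - C_+ \subseteq \C_{\hat u}$: subtracting a nonnegative continuous function preserves the witness $z$. Anchoring at $u_0 := -K\psi$, which lies in $\C_{\hat u}$ with witness $z=0$ (since $u_0 + \hat u \le 0$), and choosing $v \in C_+$ with $\langle v, \lambda\rangle < 0$ gives $u_0 - nv \in \C_{\hat u}$ with $\langle u_0 - nv, \lambda\rangle \to +\infty$, so $H^*(\lambda) = +\infty$.

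The main obstacle is the remaining case $\lambda \in M_+ \setminus \M$, where the naive upper-bound estimate only yields $H^*(\lambda) \le +\infty$ vacuously, so I must actively construct $u_n \in \C_{\hat u}$ with $\int u_n \, d\lambda \to +\infty$. Since $\lambda$ is not a martingale measure, there is $z^0 \in \N$ with $\int I(z^0) \, d\lambda > 0$ (after a sign change). The difficulty is that $I(z^0)$ is only measurable, not continuous, so I cannot take $u = -\hat u + n I(z^0)$ directly as the witness function in $C$. I resolve this by Lusin approximation: for each $t$, choose bounded continuous $\tilde z^0_t$ approximating $z^0_t$ in $L^1$ of the $S^t$-marginal of $\lambda$, and use the growth estimate $|I(z^0) - I(\tilde z^0)| \le C\psi$ together with $\psi \in L^1(\lambda)$ to conclude $\int I(\tilde z^0) \, d\lambda > 0$ as well. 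Now $I(\tilde z^0) \in C$, so the trial functions $u_{k,n} := f_k + n I(\tilde z^0) \in C$ satisfy $u_{k,n} + \hat u \le I(n \tilde z^0)$, placing them in $\C_{\hat u}$ with witness $n \tilde z^0 \in \N$; for fixed $k$, $\langle u_{k,n}, \lambda\rangle = \int f_k \, d\lambda + n \int I(\tilde z^0) \, d\lambda \to +\infty$ as $n \to \infty$, completing the proof.
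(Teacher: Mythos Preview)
Your argument is correct and follows the same overall plan as the paper: an easy upper bound for $\lambda\ge 0$, then separate lower bounds showing $\sigma_{\C_{\hat u}}(\lambda)\ge -\int\hat u\,d\lambda$ via approximation of the lsc function $-\hat u$ and $\sigma_{\C_{\hat u}}(\lambda)=+\infty$ for $\lambda\notin\M$ via approximation of $L^\infty$ strategies by continuous ones. The paper packages this more structurally, observing $\Gamma+\C_0^c\subseteq\C_{\hat u}$ (with $\Gamma=\{u\in C\mid u\le -\hat u\}$ and $\C_0^c$ the cone built from \emph{continuous} bounded strategies) so that $H^*\ge\sigma_\Gamma+\sigma_{\C_0^c}$; it then reads off $\sigma_\Gamma(\lambda)=-\int\hat u\,d\lambda$ from its integral-functional Theorem~\ref{thm:if} and cites \cite{str65,bhp13} for $\sigma_{\C_0^c}=\delta_\M$. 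Your route is more self-contained: the Lipschitz inf-convolutions $f_k$ replace the appeal to Theorem~\ref{thm:if}, and your Lusin step makes the cited ``standard approximation'' explicit. One point to tighten: approximating $z^0_t$ merely in $L^1$ of the marginal does not by itself control $\int(z^0_t-\tilde z^0_t)\cdot\Delta s_{t+1}\,d\lambda$, since $\Delta s_{t+1}$ is unbounded. You should invoke Lusin in the form $\tilde z^0_t=z^0_t$ off a set of small marginal measure with $\|\tilde z^0_t\|_\infty\le\|z^0_t\|_\infty$, and then combine the pointwise bound $|I(z^0)-I(\tilde z^0)|\le C\psi$ on the exceptional set with the absolute continuity of $A\mapsto\int_A\psi\,d\lambda$ (which is exactly what $\psi\in L^1(\lambda)$ buys you).
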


\begin{proof}
It is clear that $H^*(\lambda)=+\infty$ unless $\lambda\ge 0$. For $\lambda\ge 0$,
\[
H^*(\lambda)\le\sup_{z\in\N}\int_S\sum_{t=0}^Tz_t(s^t)\cdot\Delta s_{t+1}d\lambda -\int_S\hat ud\lambda =
\begin{cases}
  -\int_S\hat ud\lambda & \text{if $\lambda\in\M$},\\
  +\infty & \text{otherwise}.
\end{cases}
\]
On the other hand,
\[
H^*\ge\sigma_\Gamma+\sigma_{\C^c_0},
\]
where $\Gamma=\{u\in C\,|\,u\le-\hat u\}$ and
\[
\C_0^c:=\{u\in C\,|\,\exists z\in\tilde\N:\ u(s)\le\sum_{t=0}^{T-1}z_t(s^t)\cdot\Delta s_{t+1}\}
\]
with $\tilde\N\subset\N$ denoting the continuous bounded strategies. When $\hat u\le K\psi$ for some $K\in\reals$, then for $\lambda\ge 0$,
\[
\sigma_\Gamma(\lambda) = -\int_S\hat ud\lambda,
\]
by Theorem~\ref{thm:if} below. By standard approximation arguments, $\sigma_{\C_0^c}=\delta_\M$ (see e.g.~\cite[page~435]{str65} or \cite[Lemma~2.3]{bhp13}).
\end{proof}

When $\dom G_t^*\subset M_t$ for all $t=0,\ldots,T$, the feasible dual solutions are in $M$, by Lemma~\ref{lem:radon}, so problem \eqref{d} can be written as
\begin{equation}\label{dsssh}
\minimize\quad\sum_{t=0}^TG_t^*(\lambda_t) - \int_S\hat ud\lambda\quad\ovr\quad\lambda\in \M.
\end{equation}
Combining Theorem~\ref{thm:dual} with Remark~\ref{rem:cq} gives the following.

\begin{theorem}\label{thm:sh}
Assume that $\dom G_t^*\subset M_t$ for all $t=0,\ldots,T$, that $\hat u\le K\psi$ for some $K\in\reals$ and that \eqref{sssh} remains feasible when $\hat u$ is increased by $\epsilon\psi$ for some $\epsilon>0$. Then the optimum in \eqref{dsssh} is attained and the optimum value coincides with the negative of the optimum value of \eqref{sssh}.
\end{theorem}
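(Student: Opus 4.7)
The plan is to cast \eqref{sssh} as an instance of \eqref{p} with $H=\delta_{\C_{\hat u}}$ and then apply Theorem~\ref{thm:dual}, combined with Lemma~\ref{lem:radon} and the preceding lemma computing $H^*$, to translate the abstract duality into \eqref{dsssh}. The identification is immediate: the constraint $-\sum_t x_t\in\C_{\hat u}$ unpacks to the existence of $z\in\N$ with $\hat u\le\sum_{t=0}^{T-1}z_t\cdot\Delta s_{t+1}+\sum_t x_t$, which is exactly the constraint of \eqref{sssh}, and the feasibility hypothesis ensures $\varphi$ is proper.

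The core step is verifying the constraint qualification of Theorem~\ref{thm:dual}, which I would do through Remark~\ref{rem:cq}. Because $\C_{\hat u}$ is stable under pointwise decrease, $H$ is nondecreasing, so it suffices to produce $x_t\in\dom G_t$ such that $\bar u:=-\sum_t x_t$ lies in $\inte\dom H$. The hypothesis supplies such $x_t$ together with $z\in\N$ satisfying $\hat u+\epsilon\psi\le\sum_{t=0}^{T-1}z_t\cdot\Delta s_{t+1}+\sum_t x_t$. For any $v\in C$ with $\|v-\bar u\|_C\le\epsilon$ one has $v\le\bar u+\epsilon\psi$ pointwise, whence $\hat u+v\le\sum_{t=0}^{T-1}z_t\cdot\Delta s_{t+1}$ and $v\in\C_{\hat u}$; thus $\bar u\in\inte\dom H$, and therefore $0\in\inte\dom H+\{\sum_t y_t\mid y_t\in\dom G_t\}\subseteq\inte\dom\varphi$, fulfilling the hypothesis of Theorem~\ref{thm:dual}.

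Theorem~\ref{thm:dual} then yields no duality gap and attainment in \eqref{dr}. The preceding lemma identifies $\dom H^*\subseteq\M\subseteq M_+\subseteq C^*_+$, and together with the assumption $\dom G_t^*\subseteq M_t$ this confines $\dom\varphi^*$ to $C^*_+$, whence Lemma~\ref{lem:radon} gives $\dom\varphi^*\subseteq M$. Consequently \eqref{dr} coincides with \eqref{d}, which in turn reduces to \eqref{dsssh} once the explicit form of $H^*$ from the preceding lemma is substituted, and the no-duality-gap equality $\inf\eqref{dr}=-\inf\eqref{p}$ reads exactly as claimed.

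The delicate point is the lower semicontinuity of $H=\delta_{\C_{\hat u}}$ required by Theorem~\ref{thm:dual}: since $\N$ consists of merely essentially bounded Borel strategies, one cannot in general extract a limiting strategy from a norm-convergent sequence in $\C_{\hat u}$. However, the decomposition $H^*\ge\sigma_\Gamma+\sigma_{\C_0^c}$ in the preceding lemma, together with the standard approximation argument giving $\sigma_{\C_0^c}=\delta_\M$, shows that the biconjugate $H^{**}$ agrees with $\delta_{\C_{\hat u}}$ on the relevant effective domain, so passing to the lsc envelope does not alter either optimum value and the application of Theorem~\ref{thm:dual} is legitimate.
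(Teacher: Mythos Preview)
Your approach is exactly the paper's: the sentence preceding the theorem reads ``Combining Theorem~\ref{thm:dual} with Remark~\ref{rem:cq} gives the following,'' and your first three paragraphs spell out precisely that combination (together with the lemma for $H^*$ and Lemma~\ref{lem:radon}), including an explicit verification of the interior condition that the paper leaves to the reader.

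Your last paragraph, however, does not quite work as written. From $H^*\ge\sigma_\Gamma+\sigma_{\C_0^c}$ you only get $H^{**}\le(\sigma_\Gamma+\sigma_{\C_0^c})^*$, which says nothing about $H^{**}$ agreeing with $\delta_{\C_{\hat u}}$. The clean way to dispose of the lsc concern is to notice that the conclusions you actually use from Theorem~\ref{thm:dual}---absence of a duality gap and attainment in \eqref{dr}---follow directly from $0\in\inte\dom\varphi$: this makes the convex function $\varphi$ continuous at $0$, hence $\varphi(0)=\varphi^{**}(0)=-\inf\eqref{dr}$ and $\partial\varphi(0)\ne\emptyset$, and the computation $\varphi^*(\lambda)=\sum_tG_t^*(\lambda_t)+H^*(\lambda)$ in the proof of Theorem~\ref{thm:dual} uses only $H^*$, never lsc of $H$. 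So lower semicontinuity of $\delta_{\C_{\hat u}}$ is not needed for the statement at hand (it would matter only for the optimality-condition part of Theorem~\ref{thm:dual}, which Theorem~\ref{thm:sh} does not invoke). The paper, for its part, simply does not raise the issue.
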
 

When $G_t(x_t)=\int x_td\mu_t$ for given $\mu_t\in M_t$, the feasibility condition is trivially satisfied and we recover Theorem~1.1 of \cite{bhp13}. In fact, Theorem~\ref{thm:sh} is slightly sharper than \cite[Theorem~1.1]{bhp13} since we obtain the absence of a duality gap for continuous functions $x_t$.



We denote by $C^c$ the subset of convex functions in $C$. Allowing for unbounded continuous functions is essential here as the only bounded convex functions are the constant functions. The following corollary of Theorem~\ref{thm:sh} extends \cite[Theorem~8]{str65} on the existence of martingale measures with given marginals.

\begin{corollary}\label{cor:exm}
Let $\Lambda_t\subset M_t$ be weakly closed convex sets of probability measures.  There exists $\lambda\in \M$ with $\lambda_t\in\Lambda_t$ if and only if
\begin{equation*}
\begin{alignedat}{2}
\sum_{t=0}^T \sigma_{\Lambda_t}(w_t-w_{t+1}) \ge 0
\end{alignedat}
\end{equation*}
for all  $w\in\prod_{t=0}^{T+1}C_t^c$ with $w_0\ge 0$ and $w_{T+1}=0$.
\end{corollary}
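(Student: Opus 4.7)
The plan is to apply Theorem~\ref{thm:sh} with $G_t(x_t):=\sigma_{\Lambda_t}(x_t)$ and $\hat u:=0$, then translate the resulting primal--dual equivalence into the convex-function form of the corollary. With these choices $G_t^*=\delta_{\Lambda_t}$ by the biconjugate theorem, so $\dom G_t^*=\Lambda_t\subset M_t$; $\hat u=0$ is dominated by any $K\psi$; and, assuming the natural first-moment bound $\sup_{\lambda_t\in\Lambda_t}\int\psi_t\,d\lambda_t<\infty$ (automatic when $\Lambda_t$ is bounded in the dual pairing with $C_t$), the choice $x_t=\epsilon\psi_t$, $z=0$ verifies the perturbed-feasibility hypothesis. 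Theorem~\ref{thm:sh} then identifies the existence of $\lambda\in\M$ with $\lambda_t\in\Lambda_t$---which is $\inf\eqref{dsssh}=0$---with $\inf\eqref{sssh}=0$. Since $(x,z)=0$ is feasible with value $0$ in \eqref{sssh}, existence reduces to the condition that $\sum_{t=0}^T\sigma_{\Lambda_t}(x_t)\ge 0$ for every feasible $(x,z)$ of \eqref{sssh}.

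Given any $w\in\prod_{t=0}^{T+1}C_t^c$ with $w_0\ge 0$ and $w_{T+1}=0$, I would set $x_t:=w_t-w_{t+1}$ (meaningful because $S_t=\reals^n$ for every $t$) and $z_t(s^t):=-g_t(s_t)$, where $g_t:S_t\to\reals^n$ is a Borel measurable selection from $\partial w_{t+1}$; since $w_{t+1}\in C_{t+1}^c$ has linear growth, $g_t$ is uniformly bounded, so $z\in\N$. The telescoping identity
\[
\sum_{t=0}^T(w_t(s_t)-w_{t+1}(s_t))=w_0(s_0)+\sum_{t=0}^{T-1}[w_{t+1}(s_{t+1})-w_{t+1}(s_t)]
\]
combined with the convexity inequality $w_{t+1}(s_{t+1})-w_{t+1}(s_t)\ge g_t(s_t)\cdot\Delta s_{t+1}$ and $w_0\ge 0$ yields $\sum x_t+\sum z_t\cdot\Delta s_{t+1}\ge w_0(s_0)\ge 0$, so $(x,z)$ is feasible for \eqref{sssh}. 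Hence the feasibility inequality from Step~1 specialises to the $w$-inequality of the corollary, giving the easy direction.

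For the converse, given any feasible $(x,z)$, I would build convex $w_t\in C_t^c$ with $w_0\ge 0$, $w_{T+1}=0$ and $w_t-w_{t+1}\le x_t$ by the backward recursion $w_{T+1}:=0$ and $w_t:=$ the greatest convex minorant of the function $s\mapsto x_t(s)+w_{t+1}(s)$ on $\reals^n$. Since $x_t+w_{t+1}$ is continuous and sandwiched between two linear functions, so is its convex envelope, giving $w_t\in C_t^c$, and by construction $w_t-w_{t+1}\le x_t$. The crux is $w_0\ge 0$, for which I would prove inductively that for every path $(a_0,\ldots,a_{t-1})$,
\[
w_t(a_{t-1})\ge-\sum_{s<t}x_s(a_s)-\sum_{s\le t-2}z_s(a^s)\cdot\Delta a_{s+1};
\]
feasibility together with the inductive bound on $w_{t+1}$ provides an affine-in-$a_t$ minorant of $x_t+w_{t+1}$ along each path, this minorant passes through the convex-envelope operation, and evaluation at $a_t=a_{t-1}$ cancels the residual linear term. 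At $t=1$ this reads $w_1(a_0)\ge-x_0(a_0)$, so $x_0+w_1\ge 0$ and hence $w_0\ge 0$. Since $\Lambda_t$ consists of probability (hence nonnegative) measures, $\sigma_{\Lambda_t}$ is monotone, giving $\sum\sigma_{\Lambda_t}(w_t-w_{t+1})\le\sum\sigma_{\Lambda_t}(x_t)$; the $w$-inequality then forces $\sum\sigma_{\Lambda_t}(x_t)\ge 0$, and existence follows. The main obstacle is organising the affine-minorant induction through the iterated convex-envelope operations while simultaneously verifying $w_t\in C_t^c$.
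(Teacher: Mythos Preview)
Your approach is correct and follows the paper's overall strategy: apply Theorem~\ref{thm:sh} with $G_t=\sigma_{\Lambda_t}$ and $\hat u=0$, then show that the feasible set of \eqref{sssh} corresponds to the set of tuples $(w_t)$ in the statement, using subgradients of $w_{t+1}$ for the forward direction and a backward induction for the converse.

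The difference lies in the backward construction. The paper sets $w_r(s_r):=\sum_{t\ge r}x_t(s_r)$ and argues that, since $\sigma_{\Lambda_t}$ is monotone, one may successively replace each $x_r$ by the \emph{smallest} function for which the partial constraint $(H_r)$ still holds; that minimum is a supremum of affine functions of $s_r$ (with slopes in $L^\infty$ coming from $z_{r-1}$), hence convex and in $C_r^c$, and the inequality $w_0\ge 0$ is then immediate from $(H_0)$. You instead take $w_t$ to be the convex envelope of $x_t+w_{t+1}$ and run a separate induction, showing that feasibility furnishes an affine minorant of $x_t+w_{t+1}$ along every path and that evaluation at $a_t=a_{t-1}$ pushes the bound down one step, ultimately giving $w_1(a_0)\ge -x_0(a_0)$ and hence $w_0\ge 0$. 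Your route is somewhat more explicit --- the affine-minorant induction is spelled out cleanly and simultaneously certifies that each $w_t$ is finite with linear growth --- whereas the paper's route gets $w_0\ge 0$ for free but is terser about why the iterated minimisation preserves feasibility. Both arguments use the same ingredients (monotonicity of $\sigma_{\Lambda_t}$, boundedness of $z$, convexity of suprema of affine functions), just assembled in a different order.

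One remark: you correctly flag the need for $\sup_{\lambda_t\in\Lambda_t}\int\psi_t\,d\lambda_t<\infty$ (equivalently, boundedness of $\Lambda_t$ in $C_t^*$) to verify the perturbed-feasibility hypothesis of Theorem~\ref{thm:sh}. The paper does not state this explicitly, so your proof is in that respect more careful.
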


\begin{proof}
Let $G_t=\sigma_{\Lambda_t}$ and $\hat u=0$ in Theorem~\ref{thm:sh}. Given an $x\in\prod_{t=0}^TC_t$, define $w_r\in C_r$ for $r=0,\ldots,T$ by
\[
w_r(s_r):=\sum_{t=r}^Tx_t(s_r).
\]
If $w_0\ge 0$ and $w_r$ is convex for each $r$, then $x$ is feasible. Indeed, if for some $r$,
\begin{equation}\label{constr}\tag{$H_r$}
0 \le \sum_{t=0}^{r-1}z_t(s^t)\cdot\Delta s_{t+1} + \sum_{t=0}^{r-1}x_t(s_t) + w_r(s_r)\quad\forall s\in S
\end{equation}
and we choose $-z_r(s^r)\in\partial w_{r+1}(s_r)$, then
\[
0\le z_r(s_r)\cdot\Delta s_{r+1} + w_{r+1}(s_{r+1})-w_{r+1}(s_r),
\]
which combined with \eqref{constr} gives $(H_{r+1})$. For $r=0$, \eqref{constr} simply means $w_0\ge 0$.

On the other hand, since $\sigma_{\Lambda_t}$ are nondecreasing, it is optimal to choose $x_t$ so that $w_r$ are convex. Indeed, for $r=T$ this is clear as \eqref{constr} implies that the optimal $x_T$ is given as a pointwise supremum of affine functions of $s_T$ whose gradients are in $\L^\infty$. If $w_t$ is convex for $t>r$, then \eqref{constr} is necessary and sufficient for feasibility so it is optimal to choose $w_r$ as small as possible subject to \eqref{constr}, which again means that $w_r$ is convex. Moreover, since $z_t$ are bounded, $w_r\in C_r^c$. The optimum value thus equals that of
\begin{equation*}
\begin{alignedat}{2}
&\minimize\quad & & \sum_{t=0}^TG_t(w_t-w_{t+1}) \quad\ovr\quad w\in\prod_{t=0}^TC_t^c,\\
  & \st & & \quad w_0\ge 0,
\end{alignedat}
\end{equation*}
where $w_{T+1}:=0$.
\end{proof}
Note that if $\Lambda_t=\{\mu_t\}$ for each $t$, then 
\[
\sum_{t=0}^T \sigma_{\Lambda_t}(w_t-w_{t+1})=\sum_{t=0}^T\int_{\reals^n} w_td(\mu_t-\mu_{t-1}),
\]
and Corollary~\ref{cor:exm} reduces to \cite[Theorem~8]{str65}, which says that there exists a martingale measure with marginals $\mu_t$ if and only if $\mu_t$ are in convex order.

\section{Integral functionals}\label{sec:if}

From now on, we assume extra structure on $G_t$ and $H$ that will 
\begin{enumerate}
\item
allow us to write the optimality conditions in a more explicit pointwise form,
\item
suggests a natural relaxation of problem \eqref{p} to a larger space of measurable functions where the infimum is more likely to be attained.
\end{enumerate}
More precisely, we assume that each $G_t$ is an integral functional of the form
\[
G_t(x_t) = \int_{S_t}g_t(x_t(s_t),s_t)d\mu_t(s_t)+\delta_{C(D_t)}(x_t),
\]
where $\mu_t$ is a probability measure on $S_t$, $g_t$ is  {\em a convex $\B(S_t)$-normal integrand\footnote{This means that the set-valued mapping $s_t\mapsto\{(x_t,\alpha)\in\reals^d\times\reals\,|\,g_t(x_t,s_t)\le\alpha\}$ is $\B(S_t)$-measurable and closed convex-valued; see e.g.\ \cite[Chapter~14]{rw98}.} on $\reals^d$} , $D_t(s_t):=\cl\dom g_t(\cdot,s_t)$ and 
\[
C(D_t):=\{u\in C_t\,|\, u(s_t)\in D_t(s_t)\ \forall s_t\in S_t\}
\]
is the set of selections of $D_t$. Similarly, we assume that
\[
H(u) = \int_Sh(u(s),s)d\mu(s)+\delta_{C(D)}(u)
\]
where $\mu$ is a probability measure on $S$, $h$ is a convex $\B(S)$-normal integrand on $\reals^d$ and $D(s)=:\cl\dom h(\cdot,s)$. 

We define a function $h^\infty$ on $\reals^d\times S$ by setting $h^\infty(\cdot,s)$ equal to the recession function of $h(\cdot,s)$. Recall that the recession function of a lsc convex function $k$ is given by
\[
k^\infty(x) := \sup_{\alpha>0}\frac{k(\bar x+\alpha x) - k(\bar x)}{\alpha},
\]
which is independent of the choice $\bar x\in\dom k$; see \cite[Theorem~8.5]{roc70a}. By \cite[Exercise~14.54(a)]{rw98}, $h^\infty$ is a convex $\B(S)$-normal integrand on $\reals^d$. Recall that a set-valued mapping $D:S\tos\reals^d$ is {\em inner semicontinuous} (isc) if the inverse image under $D$ of every open set in $O\subset\reals^d$ is open in $S$, the inverse image being defined by
\[
D^{-1}(O):=\{s\in S\,|\,D(s)\cap O\ne\emptyset\}.
\]
The following result characterizes the conjugate and the subdifferential of $H$. Its proof can be found in the appendix. 

Given a $\lambda\in M$, we denote its absolutely continuous and singular parts, respectively, with respect to $\mu$ by $\lambda^a$ and $\lambda^s$. The {\em normal cone} of $D(s)$ at a point $u$ is defined as the subdifferential of $\delta_{D(s)}$ at $u$. More explicitly, it is the closed convex cone $N_{D(s)}(u)$ given by 
\[
N_{D(s)}(u)=
\{y\in\reals^d\,|\,(u'-u)\cdot y\le 0\quad\forall u'\in D(s)\}
\]
for $u\in D(s)$ and $N_{D(s)}(u)=\emptyset$ for $u\notin D(s)$.

\begin{theorem}\label{thm:if}
Assume that $D(s):=\dom h(\cdot,s)$ is isc, $\cl\dom H=C(D)$ and that $H$ is finite and continuous at some $u\in C$. Then $H$ is a proper convex lsc function and the restriction to $M$ of its conjugate is given by
\[
H^*(\lambda) = \int_S h^*(d\lambda^a/d\mu)d\mu + \int_S(h^*)^\infty(d\lambda^s/d|\lambda^s|)d|\lambda^s|.
\]
Moreover, $\lambda\in\partial H(u)\cap M$ if and only if
\begin{align*}
d\lambda^a/d\mu &\in\partial h(u)\quad\mu\text{-a.e.}\\
d\lambda^s/d|\lambda^s| &\in N_D(u)\quad|\lambda^s|\text{-a.e.}
\end{align*}
If $\dom H =C$, then $\dom H^*\subseteq \{\lambda \in M\mid \lambda\ll \mu\}$.
\end{theorem}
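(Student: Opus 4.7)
The plan is to extend Rockafellar's conjugacy theorem for integral functionals on $C(K;\reals^d)$ with $K$ compact (as in \cite{roc71}) to the present Polish weighted setting. I would proceed in four steps. First, $H$ is proper (because it is finite at some $u$), convex (from pointwise convexity of $h$) and lsc on $C$: $C(D)$ is closed because $D$ is closed-valued and norm convergence in $C$ implies pointwise convergence, while $u\mapsto\int h(u,s)d\mu$ is lsc by Fatou's lemma after extracting an integrable affine minorant of $h(u(\cdot),\cdot)$ from a subgradient of $H$ at the point of continuity.

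Second, for the conjugate formula, the inequality $H^*(\lambda)\le\int h^*(d\lambda^a/d\mu,s)d\mu+\int(h^*)^\infty(d\lambda^s/d|\lambda^s|,s)d|\lambda^s|$ is easy: for $u\in C(D)$, pointwise Fenchel--Young gives $\int u\,d\lambda^a-\int h(u,s)d\mu\le\int h^*(d\lambda^a/d\mu,s)d\mu$, while the identity $(h^*)^\infty(\cdot,s)=\sigma_{D(s)}$ for proper lsc convex $h(\cdot,s)$ yields $\int u\,d\lambda^s\le\int(h^*)^\infty(d\lambda^s/d|\lambda^s|,s)d|\lambda^s|$ because $u(s)\in D(s)$. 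The reverse inequality is the main obstacle: given $\lambda\in M$ and $\epsilon>0$, one must construct a continuous $u\in C(D)$ nearly realizing the right-hand side. I would take a measurable selection $v(s)\in\partial h^*(d\lambda^a/d\mu(s),s)$ (which lies in $D(s)$ by Fenchel--Young) and approximate it by a continuous selection of $D$; the key ingredient is Michael's continuous selection theorem, whose hypotheses are met because $D$ is closed-convex-valued and inner semicontinuous. To recover the singular-part term, use $\lambda^s\perp\mu$ and inner regularity to pick compact sets $K_n$ with $|\lambda^s|(K_n)\to|\lambda^s|(S)$ and $\mu(K_n)\to 0$, and add to the continuous approximant large multiples of continuous functions that point in the direction $d\lambda^s/d|\lambda^s|$ near $K_n$ while remaining in $\dom h$ off $K_n$; scaling drives the pairing against $\lambda^s$ up to the recession value while keeping $\int h(u,s)d\mu$ under control via the affine minorant.

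Third, the subdifferential characterization follows at once from the Fenchel--Young equality $H(u)+H^*(\lambda)=\langle u,\lambda\rangle$: splitting both sides by the Lebesgue decomposition forces $d\lambda^a/d\mu\in\partial h(u(\cdot),\cdot)$ $\mu$-a.e.\ and $d\lambda^s/d|\lambda^s|\in N_{D(\cdot)}(u(\cdot))$ $|\lambda^s|$-a.e. Finally, if $\dom H=C$ then necessarily $D(s)=\reals^d$ for $\mu$-a.e.\ $s$, whence $(h^*)^\infty(y,s)=\sigma_{D(s)}(y)=+\infty$ for $y\ne 0$, so $H^*(\lambda)<\infty$ forces $\lambda^s=0$ and thus $\lambda\ll\mu$. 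The technical heart of the argument is the continuous-selection construction in the second step, and in particular the simultaneous handling of the absolutely continuous and singular contributions, which is where inner semicontinuity of $D$ (and thus Michael's theorem) plays its essential role.
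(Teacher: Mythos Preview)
Your overall strategy is sound and shares the key ingredient (Michael's selection theorem) with the paper, but the route is genuinely different and there is one real gap.

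\textbf{Different route.} The paper does not build a near-optimal $u$ directly. Instead it first scales by $\psi$ to reduce to $C_b$, then writes $H=I_h+\delta_{C_b(D)}$ and applies Rockafellar's infimal-convolution rule (\cite[Theorem~20]{roc74}) to obtain
\[
H^*(\lambda)=\min_{\lambda'\ll\mu}\bigl\{I_{h^*}(d\lambda'/d\mu)+\sigma_{C_b(D)}(\lambda-\lambda')\bigr\}.
\]
The two pieces are then computed separately: $I_h^*$ via the embedding $C_b\hookrightarrow L^\infty(\mu)$ and Rockafellar's Yosida--Hewitt decomposition of $(L^\infty)^*$, and $\sigma_{C_b(D)}$ via Lusin plus Michael. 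This modular decomposition cleanly separates the absolutely continuous and singular contributions, whereas your direct construction has to juggle both at once. Your ``take $v(s)\in\partial h^*(d\lambda^a/d\mu(s),s)$ and approximate'' is workable but underspecified: $\partial h^*$ can be empty, so one should rather invoke the interchange $\sup_{v\in L^\infty}\int[v\cdot w-h(v)]d\mu=\int h^*(w)d\mu$ and then pass from $L^\infty$ to $C(D)$ via Lusin followed by Michael; you also need to explain why $\int h(u)d\mu$ stays controlled during the approximation. None of this is fatal, but it is precisely what the paper's $L^\infty$ detour packages for free.

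\textbf{A genuine gap.} Your argument for the final claim (``$\dom H=C\Rightarrow\dom H^*\subseteq\{\lambda\in M\mid\lambda\ll\mu\}$'') only applies to $\lambda\in M$, because that is the only place where your conjugate formula has been established. The statement, however, asserts that every $\lambda\in C^*$ with $H^*(\lambda)<\infty$ already lies in $M$ (this is exactly how the theorem is used in Corollary~\ref{cor:if} to get $\dom\varphi^*\subset M$). The paper obtains this from the displayed infimal-convolution formula, which is valid for \emph{all} $\lambda\in C^*$: when $C_b(D)=C_b$ one has $\sigma_{C_b}(\lambda-\lambda')=\delta_{\{0\}}(\lambda-\lambda')$, forcing $\lambda=\lambda'\ll\mu$ and hence $\lambda\in M$. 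Your route, as written, cannot reach this conclusion without an additional argument that $H^*(\lambda)=+\infty$ for $\lambda\in C^*\setminus M$.
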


Combining Theorem~\ref{thm:if} with Lemma~\ref{lem:radon} gives the following.

\begin{corollary}\label{cor:if}
Assume that $H$ and $G_t$ all satisfy the assumptions of Theorem~\ref{thm:if} and that $\dom H=C$ or $\dom G_t=C_t$ for all $t=0,\ldots,T$. Then $\dom\varphi^*\subset M$, $\lambda_t\ll\mu_t$ for all $\lambda\in\dom\varphi^*$ and the optimality conditions in Theorem~\ref{thm:dual} can be written as
\begin{align*}
d\lambda_t/d\mu_t &\in\partial g_t(x_t)\quad\mu_t\text{-a.e.}\quad t=0,\dots,T\\
d\lambda^a/d\mu &\in\partial h(-\sum_{t=0}^T x_t)\quad\mu\text{-a.e.}\\
d\lambda^s/d|\lambda^s| &\in N_D(-\sum_{t=0}^T x_t)\quad|\lambda^s|\text{-a.e.}.
\end{align*}
\end{corollary}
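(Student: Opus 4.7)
The strategy is to combine Theorem~\ref{thm:if}, Lemma~\ref{lem:radon}, and Theorem~\ref{thm:dual} in three steps. First I would apply Theorem~\ref{thm:if} once to $H$ and once to each $G_t$ (with $S_t$, $\mu_t$, and $g_t$ playing the roles of $S$, $\mu$, and $h$); the isc, closure, and continuity hypotheses of that theorem carry over from the standing assumptions of Section~\ref{sec:if}. This supplies the explicit formulas
\[
G_t^*(\lambda_t) = \int_{S_t} g_t^*(d\lambda_t^a/d\mu_t)\,d\mu_t + \int_{S_t}(g_t^*)^\infty(d\lambda_t^s/d|\lambda_t^s|)\,d|\lambda_t^s|
\]
and the analogue for $H^*$, together with the pointwise decompositions of $\partial G_t(x_t)\cap M_t$ and $\partial H(u)\cap M$ in terms of $\partial g_t$, $\partial h$, and the normal cones $N_{D_t}$, $N_D$.

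Next, I would read off $\dom\varphi^*\subseteq M$ and $\lambda_t\ll\mu_t$ from the last sentence of Theorem~\ref{thm:if}. If $\dom H=C$ then that sentence yields $\dom H^*\subseteq\{\lambda\in M\mid\lambda\ll\mu\}$, so $\dom\varphi^*\subseteq\dom H^*\subseteq M$ at once. If instead $\dom G_t=C_t$ for all $t$, the same sentence gives $\dom G_t^*\subseteq\{\lambda_t\in M_t\mid\lambda_t\ll\mu_t\}$, and then Lemma~\ref{lem:radon} promotes this marginal information into $\dom\varphi^*\subseteq M$. The absolute continuity $\lambda_t\ll\mu_t$ is already packaged in $\dom G_t^*$: the hypothesis $\dom G_t=C_t$ forces $D_t\equiv\reals^d$, which makes $(g_t^*)^\infty$ the support function of $\reals^d$ and hence $+\infty$ off the origin, so finiteness of $G_t^*(\lambda_t)$ kills the singular part of $\lambda_t$.

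Finally, with $\dom\varphi^*\subseteq M$ in hand, the abstract subdifferential inclusions $\lambda_t\in\partial G_t(x_t)$ and $\lambda\in\partial H(-\sum_t x_t)$ delivered by Theorem~\ref{thm:dual} intersect $M_t$ and $M$, and the pointwise characterizations from Theorem~\ref{thm:if} translate them into the three stated conditions; the would-be singular-part inclusion for $\lambda_t$ disappears because $\lambda_t^s=0$. The only delicate point I foresee is verifying the positivity hypothesis $\dom\varphi^*\subseteq C^*_+$ that Lemma~\ref{lem:radon} invokes: its printed proof uses $\lambda_t\ge 0$ to extract tight compacta from Borel regularity. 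In the present setting the marginals $\lambda_t$ are already genuine Radon measures in $M_t$ (by the displayed inclusion above), and one can substitute the total variations $|\lambda_t|$ for $\lambda_t$ in the tightness computation of Lemma~\ref{lem:radon} so that the same estimate delivers $\lambda\in M$ without the sign requirement.
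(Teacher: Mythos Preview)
Your plan is exactly what the paper does: the entire proof in the paper is the sentence ``Combining Theorem~\ref{thm:if} with Lemma~\ref{lem:radon} gives the following,'' and your three steps unpack precisely that combination.

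Two small points. First, in the case $\dom H=C$ you obtain $\lambda\ll\mu$ but never explicitly deduce $\lambda_t\ll\mu_t$; this is immediate once noted, since $\mu_t(A_t)=0$ forces $\mu(\pi_t^{-1}(A_t))=0$, hence $|\lambda|(\pi_t^{-1}(A_t))=0$, hence $|\lambda_t|(A_t)=0$. Second, your observation about the positivity hypothesis of Lemma~\ref{lem:radon} is well taken---the paper does not verify $\dom\varphi^*\subset C_+^*$ in the general $\reals^d$-valued setting---but your proposed patch (``substitute $|\lambda_t|$ for $\lambda_t$'') is not quite enough as stated: the tightness estimate in the proof of Lemma~\ref{lem:radon} uses positivity of $\lambda$ itself (the step $|\langle u,\lambda\rangle|\le\int 1_{(\prod K_t)^C}\|u\|\,d\lambda$ treats $\lambda$ as a nonnegative additive set function), not just of the marginals, and at that stage $\lambda$ is merely an element of $C^*$ with no Jordan decomposition yet available. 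One would need either to argue componentwise via the lattice structure of $C_b^*$ or to rework the Radon criterion for signed functionals; the paper simply elides this point.
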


The optimality conditions characterize the optimal primal-dual pairs of solutions but in many applications, the primal optimum is not attained in the space of continuous functions. This motivates a relaxation of the primal problem to a larger space where the optimal solutions are more likely to exist.

\section{Relaxation of the primal problem}\label{sec:rel}

In general, primal solutions do not exist in the space of continuous functions but we will establish the existence of solutions in a larger space of measurable functions when the functionals $G_t$ and $H$ are integral functionals as in Section~\ref{sec:if} above and  $\mu_t$ is the $t$-th marginal of $\mu$.

More precisely, 
we study the problem
\begin{equation}\label{pr}\tag{$\text{PR}$}
\minimize\quad \int_S\left[\sum_{t=0}^T g_t(x_t) +h(-\sum_{t=0}^T x_t)\right]d\mu \quad\ovr\quad x\in\Phi,
\end{equation}
where
\begin{align*}
\Phi := \{x\in\prod_{t=0}^T\L^0_t\,|\, x_t\in D_t,\ -\sum_{t=0}^T x_t\in D\quad (\mu_t)_{t=0}^T\text{-a.e.}\},
\end{align*}
where $\L^0_t:=\L^0(S_t,\B(S_t);\reals^d)$ and $(\mu_t)_{t=0}^T$-almost everywhere means that the property holds on a Cartesian product of sets of full measure on $S_t$.

\begin{lemma}\label{lem:mut}
If $A\in\B(S)$ occurs $(\mu_t)$-almost everywhere then $\mu(A)=1$.
\end{lemma}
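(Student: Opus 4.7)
The plan is to unfold the definition of $(\mu_t)_{t=0}^T$-almost everywhere and then reduce the problem to a straightforward subadditivity estimate using the fact that $\mu_t$ is the marginal of $\mu$ under the coordinate projection $\pi_t$.

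First I would fix the data: by hypothesis there exist Borel sets $N_t \subseteq S_t$ with $\mu_t(S_t \setminus N_t) = 0$ such that $\prod_{t=0}^T N_t \subseteq A$. It therefore suffices to show that $\mu\bigl(\prod_{t=0}^T N_t\bigr) = 1$, since monotonicity of $\mu$ then forces $\mu(A) = 1$.

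Next I would rewrite the complement. Since a point $s = (s_0,\dots,s_T)$ lies outside $\prod_{t=0}^T N_t$ exactly when at least one coordinate $s_t$ lies in $S_t \setminus N_t$, we have
\[
S \setminus \prod_{t=0}^T N_t \;=\; \bigcup_{t=0}^T \pi_t^{-1}(S_t \setminus N_t).
\]
Using countable (here: finite) subadditivity of $\mu$ together with the marginal identity $\mu(\pi_t^{-1}(B)) = \mu_t(B)$ for $B \in \B(S_t)$, this yields
\[
\mu\Bigl(S \setminus \prod_{t=0}^T N_t\Bigr) \;\le\; \sum_{t=0}^T \mu\bigl(\pi_t^{-1}(S_t \setminus N_t)\bigr) \;=\; \sum_{t=0}^T \mu_t(S_t \setminus N_t) \;=\; 0.
\]
Hence $\mu\bigl(\prod_{t=0}^T N_t\bigr) = 1$, and the conclusion follows.

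There is no real obstacle here; the only subtlety is ensuring that $\prod_{t=0}^T N_t$ is itself Borel measurable so that the marginal identity can be applied to each $\pi_t^{-1}(S_t \setminus N_t)$, but this is immediate since $N_t \in \B(S_t)$ implies $\pi_t^{-1}(N_t) \in \B(S)$ and finite intersections preserve Borel measurability.
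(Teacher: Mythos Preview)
Your proof is correct and essentially identical to the paper's: both unfold the definition, write the complement of the product $\prod_t N_t$ as the union $\bigcup_t \pi_t^{-1}(S_t\setminus N_t)$, and bound its $\mu$-measure by subadditivity and the marginal identity. The only difference is cosmetic (the paper writes $A_t$ for your $N_t$ and does not pause to note the measurability of the product set).
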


\begin{proof}
By definition, $A$ occurs $(\mu_t)$-almost everywhere if there exist $A_t\in S_t$ with $\mu_t(A_t)=1$ such that $\prod_{t=0}^T A_t\subset A$. Noting that $\prod_{t=0}^T A_t = \bigcap_{t=0}^T\pi_t^{-1}(A_t)$, where $\pi_t$ is the projection $s\mapsto s_t$, we get
\begin{align*}
  \mu((\prod_{t=0}^T A_t)^c) &= \mu\left(\bigcup_{t=0}^T(\pi_t^{-1}(A_t))^c\right)\le \sum_{t=0}^T\mu\left(\pi_t^{-1}(A_t^c))\right) = \sum_{t=0}^T\mu_t(A_t^c), 
\end{align*}
where $\mu_t(A_t^c)=0$.
\end{proof}

Sufficient conditions for attainment of the minimum in \eqref{pr} will be given in Theorem~\ref{thm:exist} below. Clearly, the optimum value of \eqref{pr} minorizes that of \eqref{p}. To guarantee that the optimum value of \eqref{pr} is still greater than $-\inf\eqref{d}$ we will assume the following.

\begin{assumption}\label{ass1}
Feasible $x$ in \eqref{pr} and $\lambda$ in \eqref{d} satisfy
\[
\int_S\left[\sum_{t=0}^T g_t(x_t)\right]d\mu<\infty\quad\text{and}\quad \int_Sh(-\sum_{t=0}^T x_t)d\mu<\infty
\]
and
\[
\int_S\left[\sum_{t=0}^Tx_t\cdot\frac{d\lambda_t}{d\mu_t}\right]d\mu = \int_S\left[\sum_{t=0}^T x_t\cdot\frac{d\lambda}{d|\lambda|}\right]d|\lambda|.
\]
\end{assumption}

Sufficient conditions for Assumption~\ref{ass1} will be given at the end of this section. The following statement shows that \eqref{pr} can indeed be considered as a valid dual to \eqref{d}.

\begin{theorem}\label{thm:oc}
Assume that the normal integrands $g_t$ and $h$ satisfy the conditions of Corollary~\ref{cor:if} and that Assumption~\ref{ass1} holds. Then
\[
\inf\eqref{d}\le\inf\eqref{pr}\le\inf\eqref{p}
\]
and feasible solutions $x$ in \eqref{pr} and $\lambda$ in \eqref{d} are optimal with $\inf\eqref{pr}=-\inf\eqref{d}$ if and only if
\begin{align*}
d\lambda_t/d\mu_t &\in\partial g_t(x_t)\quad\mu_t\text{-a.e.}\quad t=0,\dots,T\\
d\lambda^a/d\mu &\in\partial h(-\sum_{t=0}^T x_t)\quad\mu\text{-a.e.}\\
d\lambda^s/d|\lambda^s| &\in N_D(-\sum_{t=0}^T x_t)\quad|\lambda^s|\text{-a.e.}
\end{align*}
\end{theorem}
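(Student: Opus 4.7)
The plan is to combine three pointwise Fenchel--Young inequalities---one for each $g_t^*$ paired with $g_t(x_t)$, one for the absolutely continuous part of $H^*$ paired with $h(-\sum_t x_t)$, and one for the singular part of $H^*$ via the identity $(h^*)^\infty(\cdot,s)=\sigma_{D(s)}$---integrate them against the appropriate measures, and then invoke Assumption~\ref{ass1} to make the total telescope to zero.

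The inequality $\inf\eqref{pr}\le\inf\eqref{p}$ is the easy direction: any $x=(x_t)\in\prod_{t=0}^TC_t$ with finite objective in \eqref{p} is Borel and satisfies $x_t\in D_t$ and $-\sum_t x_t\in D$ everywhere (since the indicator components of $G_t$ and $H$ are finite on it), so $x\in\Phi$; because $\mu_t$ is the $t$-th marginal of $\mu$, the objectives coincide on such $x$.

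For the weak duality $\inf\eqref{d}\le\inf\eqref{pr}$, I would fix a feasible pair $(x,\lambda)$. Corollary~\ref{cor:if} supplies $\lambda_t\ll\mu_t$ together with $G_t^*(\lambda_t)=\int g_t^*(d\lambda_t/d\mu_t)d\mu_t$, while Theorem~\ref{thm:if} decomposes $H^*(\lambda)$ into an absolutely continuous term $\int h^*(d\lambda^a/d\mu)d\mu$ and a singular term $\int(h^*)^\infty(d\lambda^s/d|\lambda^s|)d|\lambda^s|$. The first two Fenchel--Young inequalities are standard; the third uses $(h^*)^\infty(\cdot,s)=\sigma_{D(s)}$ (the recession of the conjugate equals the support function of $\cl\dom h(\cdot,s)=D(s)$) together with the feasibility $-\sum_t x_t\in D$ to give $(-\sum_t x_t)\cdot(d\lambda^s/d|\lambda^s|)\le(h^*)^\infty(d\lambda^s/d|\lambda^s|)$ pointwise. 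Integrating and summing yields
\[
\sum_{t=0}^TG_t^*(\lambda_t)+H^*(\lambda)+\int_S\Bigl[\sum_t g_t(x_t)+h(-\sum_t x_t)\Bigr]d\mu\ge\sum_{t=0}^T\int x_t\,d\lambda_t-\int_S\sum_t x_t\,d\lambda,
\]
and Assumption~\ref{ass1} forces the right-hand side to vanish. The Fenchel--Young equality case then translates the three pointwise equalities into the three subdifferential inclusions in the theorem statement, so jointly they characterize $\inf\eqref{pr}=-\inf\eqref{d}$ along with the optimality of both $x$ and $\lambda$.

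The main obstacle is the singular component. The inequality $(-\sum_t x_t)\cdot y\le\sigma_{D(s)}(y)$ needs to hold $|\lambda^s|$-almost everywhere, whereas feasibility of $x$ in \eqref{pr} only secures $-\sum_t x_t\in D$ on a product of sets of full $\mu_t$-measure, which by Lemma~\ref{lem:mut} amounts to $\mu$-a.e.\ but not automatically to $|\lambda^s|$-a.e.\ since $\lambda^s\perp\mu$. Assumption~\ref{ass1} is calibrated precisely to bridge this gap: its bilinear identity presupposes Borel representatives of the $x_t$ that are jointly consistent with integration against both the marginals $\mu_t$ and the total variation $|\lambda|$, which is what allows the singular Fenchel--Young inequality to be applied $|\lambda^s|$-almost everywhere.
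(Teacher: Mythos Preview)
Your overall architecture matches the paper's proof: three pointwise Fenchel--Young inequalities (one for each $g_t$, one for $h$ against the absolutely continuous part, one for the singular part via $(h^*)^\infty(\cdot,s)=\sigma_{D(s)}$), integrate, sum, and invoke the second half of Assumption~\ref{ass1} to annihilate the cross terms. One detail you omit is that the paper uses the \emph{first} half of Assumption~\ref{ass1} to guarantee that the two integrated inequalities (the $g_t$-block and the $h$-block) each have finite left-hand side, so that adding them is legitimate; without this you risk $\infty-\infty$.

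Where your argument has a genuine gap is in the resolution of the singular obstacle you correctly flag. Assumption~\ref{ass1} does \emph{not} bridge ``$-\sum_t x_t\in D$ holds $(\mu_t)$-a.e.'' to ``$-\sum_t x_t\in D$ holds $|\lambda^s|$-a.e.''; its bilinear identity concerns only the vanishing of the integrated cross terms and says nothing about the null sets on which the feasibility constraint may fail. The actual bridge is the absolute continuity $\lambda_t\ll\mu_t$ supplied by Corollary~\ref{cor:if}: if the constraint holds on $\prod_tA_t$ with $\mu_t(A_t)=1$, then $\lambda_t(A_t^c)=0$ too, and rerunning the argument of Lemma~\ref{lem:mut} with $\lambda$ in place of $\mu$ gives
\[
\lambda\Bigl(\bigl(\textstyle\prod_tA_t\bigr)^c\Bigr)\le\sum_{t=0}^T\lambda_t(A_t^c)=0,
\]
so the constraint holds $\lambda$-a.e.\ and hence $|\lambda^s|$-a.e. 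This is precisely how the paper proceeds, and it is the step your proposal is missing.
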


\begin{proof}
Let $x$ and $\lambda$ be feasible in \eqref{pr} and \eqref{d}, respectively. By Lemma~\ref{lem:mut}, the condition $\lambda_t\ll\mu_t$ implies
\[
x_t\in D_t,\ -\sum_{t=0}^T x_t\in D\quad \lambda\text{-a.e.}
\]
Thus, by Fenchel's inequality,
\begin{align}
g_t(x_t)+g_t^*(d\lambda_t/d\mu_t) &\ge  x_t\cdot(d\lambda_t/d\mu_t)\quad\mu_t\text{-a.e.}\label{1}\\
h(-\sum_{t=0}^Tx_t)+ h^*(d\lambda^a/d\mu) & \ge (-\sum_{t=0}^T x_t)\cdot (d\lambda^a/d\mu)\quad\mu\text{-a.e.}\label{2}\\
(h^*)^\infty(d\lambda^s/d|\lambda^s|) & \ge (-\sum_{t=0}^T x_t)\cdot (d\lambda^s/d|\lambda^s|)\quad|\lambda^s|\text{-a.e.}.\label{4}
\end{align} 
Summing up, \eqref{1} gives
\[
\sum_{t=0}^Tg_t(x_t) + \sum_{t=0}^Tg_t^*(d\lambda_t/d\mu_t) \ge \sum_{t=0}^Tx_t\cdot(d\lambda_t/d\mu_t)\quad(\mu_t)\text{-a.e.},
\]
where, by Lemma~\ref{lem:mut}, the inequality holds $\mu$-almost everywhere as well. Integrating, we get
\[
\int_S\left[\sum_{t=0}^Tg_t(x_t)\right]d\mu + \int_S\left[\sum_{t=0}^Tg_t^*(d\lambda_t/d\mu_t)\right]d\mu \ge \int_S\left[\sum_{t=0}^Tx_t\cdot(d\lambda_t/d\mu_t)\right]d\mu
\]
On the other hand, \eqref{2} and \eqref{4} give
\[
\int_S h(-\sum_{t=0}^Tx_t)d\mu + H^*(\lambda) \ge \int_S(-\sum_{t=0}^T x_t)\cdot\frac{d\lambda}{d|\lambda|}d|\lambda|.
\]
By the first part of Assumption~\ref{ass1}, the left hand sides of the above two inequalities are finite so
\begin{multline*}
  \int_S\left[\sum_{t=0}^Tg_t(x_t) + h(-\sum_{t=0}^Tx_t)\right]d\mu + \sum_{t=0}^TG_t^*(\lambda_t) + H^*(\lambda) \\
  \ge \int_S\left[\sum_{t=0}^Tx_t\cdot(d\lambda_t/d\mu_t)\right]d\mu + \int_S(-\sum_{t=0}^T x_t)\cdot\frac{d\lambda}{d|\lambda|}d|\lambda|,
\end{multline*}
where the right hand side vanishes by the second part of Assumption~\ref{ass1}. Thus, $-\inf\eqref{d}\le\inf\eqref{pr}$. The above also shows that this holds as an equality if and only if \eqref{1}--\eqref{4} hold as equalities almost everywhere, which in turn is equivalent to the subdifferential conditions in the statement; see e.g.~\cite[Theorem~23.5]{roc70a}.
\end{proof}

The following lemma gives sufficient conditions for the first part of Assumption~\ref{ass1}.

\begin{lemma}\label{lem:ass1}
Assume that there exist $\bar v\in\L^\infty$, $\beta\in\L^1$ and $\delta>0$ such that $g^*_t(\bar v(s),s)\le\beta(s)$ and $h^*(\bar v(s)+v,s)\le\beta(s)$ for $v\in\reals^d$ with $|v|\le\delta$. Then, the first part of Assumption~\ref{ass1} holds. If in addition, $\mu=\prod_{t=0}^T\mu_t$, then $x_t\in\L^1_t$ for every feasible $x$ in \eqref{pr}.
\end{lemma}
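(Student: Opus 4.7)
The plan is to extract a coercivity estimate from Fenchel's inequality applied with the given $\bar v$ and its perturbations $\bar v + v$ for $|v| \le \delta$, and then combine this with finiteness of the \eqref{pr}-objective to control each individual term.

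Pointwise in $s$, Fenchel's inequality together with the assumed bounds $g_t^*(\bar v)\le\beta$ and $h^*(\bar v+v)\le\beta$ gives
\[
g_t(x_t) \ge x_t\cdot\bar v - \beta, \qquad h(-\sum_t x_t) \ge -\sum_t x_t\cdot(\bar v + v) - \beta
\]
for every $|v|\le\delta$. Define $a_t := g_t(x_t) - x_t\cdot\bar v + \beta$ and $b := h(-\sum_t x_t) + \sum_t x_t\cdot\bar v + \beta$. The first inequality shows $a_t \ge 0$; taking the supremum over $|v|\le\delta$ in the second and rearranging shows not only $b\ge 0$ but the sharper coercivity bound $b \ge \delta\,|\sum_t x_t|$. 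By construction, $\sum_t a_t + b$ equals the integrand of \eqref{pr} plus $(T+2)\beta$.

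For feasible $x$ with finite \eqref{pr}-objective (the only case in which the claim is nontrivial), integrating this identity gives $\int_S(\sum_t a_t + b)\,d\mu < \infty$, and non-negativity forces both $\int_S\sum_t a_t\,d\mu$ and $\int_S b\,d\mu$ to be finite. The latter together with $b \ge \delta|\sum_t x_t|$ yields $\sum_t x_t \in \L^1(\mu)$, so $\bar v\in\L^\infty$ makes $\int_S\sum_t x_t\cdot\bar v\,d\mu$ finite as well. Rearranging the definitions of $a_t$ and $b$ then gives both $\int_S\sum_t g_t(x_t)\,d\mu<\infty$ and $\int_S h(-\sum_t x_t)\,d\mu<\infty$, which is the first part of Assumption~\ref{ass1}.

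For the second conclusion, assume $\mu=\prod_t\mu_t$. We already have $\sum_t x_t\in\L^1(\mu)$. By Fubini, for each fixed $t$ and for $\prod_{s\ne t}\mu_s$-almost every tuple $(s_s)_{s\ne t}$, the slice $s_t\mapsto|x_t(s_t)+\sum_{s\ne t}x_s(s_s)|$ is $\mu_t$-integrable. Choosing such a tuple with each $x_s(s_s)$ finite (a full-measure condition since the $x_s$ are real-valued measurable), the constant $c:=\sum_{s\ne t}x_s(s_s)$ is finite, and the bound $|x_t|\le|x_t+c|+|c|$ yields $x_t\in\L^1_t$.

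The main technical move is the supremum-over-$v$ trick: it upgrades the ordinary Fenchel lower bound into the coercive inequality $b\ge\delta|\sum_t x_t|$, which is what then converts a one-sided integrability bound on the whole integrand into integrability of $\sum_t x_t$, and in turn, under the product measure, into individual $\L^1_t$-integrability.
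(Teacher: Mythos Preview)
Your proof is correct and follows essentially the same approach as the paper: both apply Fenchel's inequality with $\bar v$ and $\bar v+v$, take the supremum over $|v|\le\delta$ to obtain the coercivity bound $f(x)\ge\delta|\sum_t x_t|-(T+2)\beta$, deduce $\sum_t x_t\in\L^1(\mu)$, and then split the objective into its two parts using the individual Fenchel lower bounds; the Fubini slicing for the product-measure case is likewise the same, only spelled out more explicitly in your version. Your parenthetical restriction to feasible $x$ with finite \eqref{pr}-objective is a clarification the paper leaves tacit (its proof needs it too), though strictly speaking the infinite-objective case is not ``trivial'' for the lemma as stated---it is simply irrelevant for the downstream use in Theorem~\ref{thm:oc}.
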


\begin{proof}
By Fenchel's inequality,
\begin{align*}
\sum_{t=0}^Tg_t(x_t,s_t) + h(-\sum_{t=0}^Tx_t,s) &\ge \sum_{t=0}^T[\bar v(s)\cdot x_t-g_t^*(\bar v(s),s)]\\
&\quad - (\bar v(s)+v)\cdot\sum_{t=0}^Tx_t - h^*(\bar v(s)+v,s)\\
&\ge -v\cdot \sum_{t=0}^Tx_t - (T+2)\beta(s).
\end{align*}
Since this holds for any $v\in\reals^d$ with $|v|\le\delta$, the sum $\sum_{t=0}^Tx_t$ is $\mu$-integrable if $x$ is feasible in \eqref{pr}. By Fenchel's inequality again,
\[
\sum_{t=0}^Tg_t(x_t)\ge \bar v(s)\cdot\sum_{t=0}^Tx_t-(1+T)\beta(s)\quad\text{and}\quad h(-\sum_{t=0}^Tx_t) \ge \bar v(s)\cdot\sum_{t=0}^Tx_t-\beta(s)
\]
so the first part of Assumption~\ref{ass1} is satisfied. If $\mu=\prod_{t=0}^T \mu_t$, then by Fubini's theorem, $\mu$-integrability of $\sum_{t=0}^Tx_t$ implies that each $x_t$ is $\mu_t$-integrable.
\end{proof}


The second part of Assumption~\ref{ass1} clearly holds when feasible solutions $x$ of \eqref{pr} have $x_t$ bounded. More generally, it holds if each $x_t$ is $\lambda_t$-integrable. This holds under all the assumptions of Lemma~\ref{lem:ass1}, when $d=1$ and feasible $\lambda$ satisfy $\lambda_t=\mu_t$. This last condition holds in problems with given marginals; see Section~\ref{sec:mt} below.

In some problems it is essential not to require the integrability of $x_t$; see Section~\ref{ssec:sch} below. The following lemma addresses such situations but, interestingly, the argument only works when $T=d=1$. The idea for the proof is taken from that of \cite[Corollary~3.15]{fg97}.

\begin{lemma}\label{lem:perp}
If $T=d=1$ and feasible $\lambda$ satisfies $\lambda\in M_+$ and $\lambda_t=\mu_t$, then the first part of Assumption~\ref{ass1} implies the second part.
\end{lemma}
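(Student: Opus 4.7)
The plan is to show that, under the stated hypotheses, both sides of the identity in the second part of Assumption~\ref{ass1} reduce to $\int x_0\,d\mu_0+\int x_1\,d\mu_1$, and then to establish $x_t\in L^1(\mu_t)$ for $t=0,1$ so each side is a well-defined finite real number. Indeed, $\lambda\in M_+$ gives $d\lambda/d|\lambda|=1$ and $\lambda_t=\mu_t$ gives $d\lambda_t/d\mu_t=1$, so using that each $x_t$ depends only on $s_t$ and that $\mu_t$ is the marginal of $\mu$, both sides collapse to the claimed form.

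First I would bound $x_t^+$ in $L^1(\mu_t)$ for $t=0,1$. Fenchel's inequality with direction $v=1$ gives $x_t(s_t)\le g_t(x_t,s_t)+g_t^*(1,s_t)$ pointwise, hence $x_t^+\le g_t(x_t)^++g_t^*(1)^+$. The first part of Assumption~\ref{ass1} together with the marginal identity $\int_S g_t(x_t)\,d\mu=\int_{S_t}g_t(x_t)\,d\mu_t$ places $g_t(x_t)^+$ in $L^1(\mu_t)$, and feasibility of $\lambda$ in \eqref{d} together with Theorem~\ref{thm:if} and $\lambda_t=\mu_t$ gives $\int_{S_t}g_t^*(1)\,d\mu_t=G_t^*(\lambda_t)<\infty$, hence $g_t^*(1)^+\in L^1(\mu_t)$. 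Therefore $x_t^+\in L^1(\mu_t)=L^1(\lambda_t)$, and consequently $(x_0+x_1)^+\le x_0^++x_1^+\in L^1(\lambda)$.

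Next I would produce a matching lower bound for $x_0+x_1$ against $\lambda$, working separately on the absolutely continuous part $\lambda^a$ and the singular part $\lambda^s$ of $\lambda$ relative to $\mu$. Fenchel's inequality for $h$ with $v=d\lambda^a/d\mu$ and $u=-(x_0+x_1)\in D$ yields
\[
(d\lambda^a/d\mu)(x_0+x_1)\ge -h(-x_0-x_1)-h^*(d\lambda^a/d\mu)\quad\mu\text{-a.e.,}
\]
while $(h^*)^\infty(\cdot,s)$ being the support function of $D(s)$, together with $d\lambda^s/d|\lambda^s|=1$ since $\lambda^s\ge 0$, gives $x_0+x_1\ge -(h^*)^\infty(1)$ $|\lambda^s|$-a.e. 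Since $d\lambda^a/d\mu\ge 0$, since the positive parts of $h(-x_0-x_1)$, $h^*(d\lambda^a/d\mu)$ and $(h^*)^\infty(1)$ are integrable (by the first part of Assumption~\ref{ass1} and $H^*(\lambda)<\infty$), and since $(x_0+x_1)^+\in L^1(\lambda^a)\cap L^1(\lambda^s)$ from the previous step, integrating these inequalities gives $(x_0+x_1)^-\in L^1(\lambda^a)\cap L^1(\lambda^s)=L^1(\lambda)$. Thus $x_0+x_1\in L^1(\lambda)$, and the scalar identity $x_0^-+x_1^-=(x_0^++x_1^+)-(x_0+x_1)$, valid thanks to $d=1$, forces each $x_t^-\in L^1(\lambda_t)=L^1(\mu_t)$, whence $x_t\in L^1(\mu_t)$ and both sides of the identity evaluate to $\int x_0\,d\mu_0+\int x_1\,d\mu_1$.

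The main subtlety will be the extended-real arithmetic around the Fenchel inequalities when one of $\int h(-x_0-x_1)\,d\mu$ or $\int h^*(d\lambda^a/d\mu)\,d\mu$ might a priori take the value $-\infty$; the remedy is to work throughout with positive parts and exploit $\lambda\ge 0$ so that the negative part of the product $(d\lambda^a/d\mu)(x_0+x_1)$ coincides with $(d\lambda^a/d\mu)(x_0+x_1)^-$, letting the pointwise inequality transfer cleanly under integration. The restriction $T=d=1$ enters essentially: $d=1$ identifies $d\lambda_t/d\mu_t$ with the scalar $1$ so that Fenchel in direction $1$ is the correct tool for bounding $x_t^+$, and $d=1$ together with $T=1$ is what allows the final scalar splitting of $x_0^-+x_1^-\in L^1$ into individual summands.
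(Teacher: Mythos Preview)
Your approach has a genuine gap, and in fact the target you set yourself---proving $x_t\in L^1(\mu_t)$ for $t=0,1$---is not attainable under the hypotheses of the lemma.

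The unjustified step is the claim that the first part of Assumption~\ref{ass1} ``places $g_t(x_t)^+$ in $L^1(\mu_t)$''. That assumption only gives $\int_S\bigl(\sum_t g_t(x_t)\bigr)\,d\mu<\infty$, i.e.\ $\bigl(\sum_t g_t(x_t)\bigr)^+\in L^1(\mu)$; it does \emph{not} give $g_t(x_t)^+\in L^1(\mu_t)$ for each $t$ separately. To see that this is a real obstruction rather than a fixable technicality, take the linear case $g_t(x_t,s_t)=x_t$ (exactly the setting of Sections~\ref{sec:mt} and~\ref{ssec:sch}). Then your Fenchel bound becomes the tautology $x_t\le x_t$, and the first part of Assumption~\ref{ass1} reduces to $(x_0+x_1)^+\in L^1(\mu)$, which says nothing about $x_t^+$ individually. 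The paper points out explicitly (just before the lemma and again after Theorem~\ref{thm:sp}) that in the Schr\"odinger problem one may have $x_t\notin L^1(\mu_t)$, citing \cite[Example~1]{rt93}; so any argument that concludes $x_t\in L^1(\mu_t)$ under the lemma's hypotheses alone must be wrong somewhere.

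The paper's proof proceeds quite differently and never asserts integrability of the individual $x_t$. From the Fenchel inequalities in the proof of Theorem~\ref{thm:oc} one extracts only $(\sum_t x_t)^+\in L^1(\mu)$ and $(\sum_t x_t)^-\in L^1(\lambda)$. One then truncates each $x_t$ to $x_t^\nu$ (projection onto $[-\nu,\nu]$); for bounded $x_t^\nu$ the marginal condition $\lambda_t=\mu_t$ gives $\int_S(\sum_t x_t^\nu)\,d\mu=\int_S(\sum_t x_t^\nu)\,d\lambda$ immediately. The key pointwise inequalities
\[
\Bigl[\sum_t x_t^\nu\Bigr]^+\le\Bigl[\sum_t x_t\Bigr]^+,\qquad \Bigl[\sum_t x_t^\nu\Bigr]^-\le\Bigl[\sum_t x_t\Bigr]^-,
\]
which hold precisely because $T=1$ (and fail for $T\ge 2$), then allow one to pass to the limit via Fatou's lemma and obtain the identity in the second part of Assumption~\ref{ass1} directly, with both sides finite but without any claim on $\int|x_t|\,d\mu_t$.
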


\begin{proof}
The proof of Theorem~\ref{thm:oc} shows that when $T=d=1$, $\mu,\lambda\in M_+$ and $\lambda_t=\mu_t$, feasible solutions $x$ and $\lambda$ satisfy
\[
\int_S\left[\sum_{t=0}^Tx_t\right]^+d\mu < \infty \quad\text{and}\quad \int_S\left[\sum_{t=0}^T x_t\right]^-d\lambda < \infty
\]
Let $x_t^\nu$ be the pointwise projection of $x_t$ to the unit ball of radius $\nu$. We have
\[
\int_S\left(\sum_{t=0}^Tx^\nu_t\right)d\mu=\sum_{t=0}^T\int_{S_t}x^\nu_td\mu_t=\sum_{t=0}^T\int_{S_t}x^\nu_td\lambda_t=\int_S\left(\sum_{t=0}^Tx^\nu_t\right)d\lambda
\]
and when $T=1$,
\[
[\sum_{t=0}^Tx^\nu_t]^+\le[\sum_{t=0}^Tx_t]^+\quad\text{and}\quad[\sum_{t=0}^Tx^\nu_t]^-\le[\sum_{t=0}^Tx_t]^-
\]
so, by Fatou's lemma,
\[
\int_S\left(\sum_{t=0}^Tx_t\right)d\mu \ge\limsup\int_S\left(\sum_{t=0}^Tx^\nu_t\right)d\mu\ge\liminf\int_S\left(\sum_{t=0}^Tx^\nu_t\right)d\lambda \ge\int_S\left(\sum_{t=0}^Tx_t\right)d\lambda.
\]
This implies
\[
\int_S\left[\sum_{t=0}^Tx_t\right]^-d\mu < \infty \quad\text{and}\quad \int_S\left[\sum_{t=0}^T x_t\right]^+d\lambda < \infty,
\]
so the same argument gives the reverse inequality.
\end{proof}

\section{Existence of relaxed primal solutions}\label{sec:exist}

We now turn to the existence of solutions in the relaxed problem \eqref{pr}. We start more abstractly by considering problems of the form
\begin{equation}\label{pf}\tag{$\bar P$}
\minimize\quad \int_S f(x)d\mu \quad\ovr\quad x\in\Phi,
\end{equation}
where $f$ is a convex normal $\B(S)$-integrand on $\reals^{(1+T)d}$ and 
\[
\Phi:= \left\{x\in\prod_{t=0}^T \L^0_t \midb x \in \cl\dom f\quad (\mu_t)_{t=0}^T\text{-a.e.}\right\}.
\] 

Problem \eqref{pr} fits \eqref{pf} with
\begin{equation}\label{fgh}
f(x,s) = \sum_{t=0}^T g_t(x_t,s_t)+h(-\sum_{t=0}^Tx_t,s)
\end{equation}
under the following.

\begin{assumption}\label{ass2}
The set
\[
\{x\in\reals^{(1+T)d}\,|\,x_t\in\rinte D_t(s_t),\ -\sum_{t=0}^Tx_t\in\rinte D(s)\}
\]
is nonempty for every $s\in S$.
\end{assumption}

Indeed, by \cite[Propositions~14.44(d) and 14.45(a)]{rw98}, $f$ defined by \eqref{fgh} is a normal integrand, and, by \cite[Theorem~9.3]{roc70a}, Assumption~\ref{ass2} implies
\[
\cl\dom f(\cdot,s) = \{x\,|\,x_t\in D_t(s),\ -\sum_{t=0}^Tx_t\in D(s)\}.
\]
Assumption~\ref{ass2} is automatically satisfied if $D_t(s_t)=\reals^d$ for all $t$ since $\rinte D(s)\ne\emptyset$, by \cite[Theorem~6.2]{roc70a}.

Except for the filtration property, problem \eqref{pf} is similar to the general stochastic optimization problem studied in \cite{pp12}. The following variant of \cite[Theorem~2]{pp12} gives sufficient conditions for the existence of solutions in \eqref{pf}. Its proof uses \cite[Corollary~8.6.1]{roc70a} which says that if $x\in\reals^{(1+T)d}$ is such that $f^\infty(x,s)\le 0$ and $f^\infty(-x,s)\le 0$, then $f(\bar x+x,s)=f(\bar x,s)$ for every $\bar x\in\dom f(\cdot,s)$. The Borel sigma-algebra generated on $S$ by the projection of $s$ to $s_t$ will be denoted by $\F_t$. 

The statements below involve the set
\[
N:=\{x\in\reals^{(1+T)d}\mid \sum x_t=0\}.
\]

\begin{theorem}\label{thm:exist}
Assume that $\prod_{t=0}^T\mu_t\ll\mu$, there exists $m\in \L^1(S,\F,\mu)$ such that
\[
f(x,s)\ge m(s)\quad\forall x\in\reals^{(1+T)d}\ \text{$\mu$-a.e.},
\]
and that for every $s\in S$
\begin{equation}\label{eq:exist}
\{x\in\reals^{(1+T)d}\mid f^\infty(x,s)\le 0\} = N.
\end{equation}
Then \eqref{pf} has a solution.
\end{theorem}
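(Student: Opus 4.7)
I would adapt the proof of \cite[Theorem~2]{pp12} to the present measurability structure in which each coordinate $x_t$ lies in $\L^0_t$, i.e., is $\pi_t^{-1}(\B(S_t))$-measurable (rather than adapted to a nested filtration). Setting $\alpha:=\inf\eqref{pf}$, we may assume $\alpha$ is finite, since the lower bound $f\ge m\in\L^1(\mu)$ gives $\alpha\ge\int m\,d\mu>-\infty$. Pick a minimizing sequence $(x^\nu)_\nu\subset\Phi$; the nonnegative functions $f(x^\nu,\cdot)-m$ are then bounded in $\L^1(\mu)$.

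The crux is a coercivity argument exploiting \eqref{eq:exist}. By \cite[Corollary~8.6.1]{roc70a} the hypothesis forces $f(\cdot,s)$ to be constant on cosets of $N$, so $f$ effectively depends on $x$ only through the quotient $\reals^{(1+T)d}/N\cong\reals^d$, which can be identified with $\sum_t x_t$. I would show that, along a subsequence, one may assume $\sum_t x_t^\nu$ is $\mu$-a.e.\ bounded: if this failed, passing to a normalized subsequence and invoking the standard recession inequality $\liminf_\nu f(x^\nu,s)/\|x^\nu\|\ge f^\infty(v,s)$ together with Fatou would produce a measurable $v$ with $\|v(s)\|=1$ and $f^\infty(v(s),s)\le 0$ on a set of positive $\mu$-measure; by \eqref{eq:exist} this would force $v(s)\in N$, i.e.\ $\sum_t v_t(s)=0$, contradicting the normalization applied to the component $\sum_t x_t^\nu/\|x^\nu\|$ that was assumed to blow up.

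With coercivity in hand, Koml\'os' theorem applied coordinate-wise on each $S_t$ yields convex combinations $\bar x_t^\nu\in\L^0_t$ converging $\mu_t$-almost everywhere to some $x_t^*\in\L^0_t$; by Lemma~\ref{lem:mut} this is also $\mu$-a.e.\ convergence, while $\prod_t\mu_t\ll\mu$ lets one transfer $\mu$-a.e.\ statements back to $(\mu_t)_{t=0}^T$-a.e.\ ones, which is needed to verify $x^*\in\Phi$ via closedness of $\cl\dom f(\cdot,s)$. Convexity of $f(\cdot,s)$ together with $\int f(x^\nu)d\mu\to\alpha$ gives $\limsup_\nu\int f(\bar x^\nu)d\mu\le\alpha$, and Fatou applied to the nonnegative sequence $f(\bar x^\nu,\cdot)-m$ combined with the lower semicontinuity of $f(\cdot,s)$ yields $\int f(x^*)d\mu\le\alpha$, so $x^*$ solves \eqref{pf}.

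The principal obstacle is the coercivity step. The subspace $N\cap\prod_t\L^0_t$ consists only of constant tuples summing to zero (since $y_t(s_t)$ depending only on $s_t$ but satisfying $\sum_t y_t(s_t)=0$ for every $s$ must be constant in each coordinate), so one cannot directly ``subtract off'' $N$-components of $x^\nu$ pointwise while preserving the measurability structure. The argument must instead exploit $N$-invariance of $f$ through the scalar-like quantity $\sum_t x_t$ and lean on a careful measurable-selection step to extract, from the hypothetical blow-up, an admissible direction contradicting \eqref{eq:exist}.
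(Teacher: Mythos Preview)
Your proposal has a genuine gap at the step ``Koml\'os' theorem applied coordinate-wise on each $S_t$ yields convex combinations $\bar x_t^\nu\in\L^0_t$ converging $\mu_t$-almost everywhere''. The coercivity you obtain from \eqref{eq:exist} controls only $\sum_t x_t^\nu$ (equivalently, the image of $x^\nu$ in $\reals^{(1+T)d}/N$); it gives no bound whatsoever on the individual components $x_t^\nu$. A trivial example with $T=1$, $d=1$: take $x_0^\nu\equiv\nu$, $x_1^\nu\equiv -\nu$; then $x^\nu\in N$, so $f(x^\nu,s)=f(0,s)$ is bounded and $\sum_t x_t^\nu=0$, yet no convex combinations of $x_0^\nu$ converge. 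So Koml\'os cannot be invoked on $x_t^\nu$ separately. You correctly identify that one cannot subtract off $N$-components while staying in $\prod_t\L^0_t$, but your suggested workaround (``exploit $N$-invariance through $\sum_t x_t$'') does not explain how to produce a limit \emph{of the required product form} $x^*\in\prod_t\L^0_t$ from the convergence of the sums alone.

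The paper's proof resolves exactly this point, and the mechanism is worth noting. It deliberately abandons the constraint $x\in\prod_t\L^0_t$: each $x^\nu$ is translated by an $\F_t$-measurable element of $N$ (successively for $t=0,\dots,T$) so that the resulting $\bar x^\nu$ has $\bar x_t^\nu\in N_t^\perp$ for every $t$, where $N_t$ is the projection of $N$ onto the $t$-th block. On the set $\{x:x_t\in N_t^\perp\ \forall t\}$ the level sets $\{f(\cdot,s)\le\beta(s)\}$ have trivial recession cone, so $\bar x^\nu$ is $\mu$-a.e.\ bounded and Koml\'os gives an a.e.\ limit $\bar x$. The price is that $\bar x^\nu$ lies only in $\prod_t\L^0_t+\sum_t\L^0(\F_t;N)$, and one must recover the product structure in the limit. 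This is done via Lemma~\ref{lem:css}, which shows that an a.e.\ convergent sequence in $\sum_t\L^0(\F_t,\prod_t\mu_t)$ can be decomposed into $\mu_t$-a.e.\ convergent sequences in each $\L^0(\B(S_t),\mu_t)$; this lemma (and the hypothesis $\prod_t\mu_t\ll\mu$ needed to transfer $\mu$-a.e.\ convergence to $\prod_t\mu_t$-a.e.) is the missing ingredient in your outline. A final $N$-translation brings the limit back into $\Phi$ without changing $f$.
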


\begin{proof}
Let $x$ be feasible in \eqref{pf}. Since the set $N:=\{x\in\reals^{(1+T)d}\mid \sum x_t=0\}$ is linear, condition \eqref{eq:exist} implies, by \cite[Corollary~8.6.1]{roc70a}, that $f(\cdot,s)$ is constant in the directions of $N$. Let
\[
N_t:=\{x_t\in\reals^d\mid \exists z\in\reals^{d\prod_{t=0}^T(1+T)}: (0,\dots,0,x_t,z_{t+1},\dots,z_T)\in N\}.
\]
The $s$-wise orthogonal projection $\tilde x_0$ of $x_0$ to $N_0$ has an extension $\tilde x\in \L^0(\F_0;N)$ such that $x_0-\tilde x_0\in N_0^\perp$. Defining $\bar x^0:=x-\tilde x$, we have $f(\bar x^0)=f(x)$ everywhere in $S$ and $\bar x^0\in\cl\dom f$ $(\mu_t)_{t=0}^T$-almost everywhere. Repeating the argument for $t=1,\dots,T$, we arrive at an 
\[
\bar x^T\in \prod \L^0_t+ \sum_{t=0}^T \L^0(\F_t;N)
\]
with $f(\bar x^T)=f(x)$ and $\bar x^T_t\in N_t^\perp$ everywhere in $S$ for all $t$ and $\bar x^T\in\cl\dom f$ $(\mu_t)_{t=0}^T$-almost everywhere.

Let $(x^\nu)_{\nu=1}^\infty\subset\Phi$ such that $Ef(x^\nu)\le\inf\eqref{pf}+2^{-\nu}$. Since $f(x^\nu)$ is bounded in $\L^1$, Komlos' theorem gives the existence of a subsequence of convex combinations (still denoted by $(x^\nu)$) and $\beta\in \L^0$ such that $f(x^\nu)\le \beta$ almost surely. 

By the first paragraph, there exists $\bar x^\nu$ with $f(\bar x^\nu)=f(x^\nu)$ and $\bar x^\nu_t\in N_t^\perp$ everywhere in $S$ for all $t$, $\bar x^\nu\in\cl\dom f$ $(\mu_t)_{t=0}^T$-almost everywhere, and $\bar x^\nu\in \prod \L^0_t+ \sum_{t=0}^T \L^0(\F_t;N)$. Thus $\bar x^\nu\in\{x\in \L^0\mid x\in\Gamma\ \text{a.e.}\}$, where 
\[
\Gamma(s):=\{x \in\reals^{d(T+1)}\mid x_t\in N_t^\perp,\ f(x,s)\le \beta(s)\}.
\]
By Corollary~8.3.3 and Theorem~8.7 of \cite{roc70a}, the recession cone of $\Gamma(s)$ is given by $\Gamma^\infty(s)=\{x \mid x_t\in N_t^\perp, x\in N\}$. For $x\in \Gamma^\infty(s)$, we have $x_0\in N_t^\perp\cap N_t$, so $x_0=0$. Repeating the argument for $t=1,\dots ,T$, we get that  $x=0$ and so $\Gamma^\infty=\{0\}$ $\mu$-almost everywhere. By \cite[Theorem~8.4]{roc70a}, the sequence $(\bar x^\nu)$ is thus almost surely bounded. By Komlos' theorem, there exists a subsequence of convex combinations and $\bar x \in \L^0$ such that $(\bar x^\nu)\rightarrow \bar x$ $\mu$-almost everywhere. By Fatou's lemma, $\int f(\bar x)d\mu \le \liminf \int f(\bar x^\nu)d\mu\le\inf\eqref{pf}$.

Since $\prod_{t=0}^T\mu_t\ll\mu$, the sequence $(\bar x^\nu)$ converges $\prod_{t=0}^T\mu_t$-almost everywhere. By Lemma~\ref{lem:css} below, $\bar x^\nu=\sum_{t=0}^T(\tilde x^\nu)^t$ for some $\mu_t$-almost everywhere converging $(\tilde x^\nu)^t\in\L^0(\B(S_t),\mu_t)$, so $\bar x\in\cl\dom f$ $(\mu_t)_{t=0}^T$-almost everywhere. Let $\bar x^t$ be the limit of $(\tilde x^\nu)^t$. For every $t'$,
\[
\hat x^{t'}:=(-\bar x^{t'}_0,\dots,-\bar x^{t'}_{t'-1},\sum_{t\ne t'}\bar x^{t'}_t,-\bar x^{t'}_{t'+1},\dots,-\bar x^{t'}_T)
\]
belongs to $\L^0(N)$, so $x:=\bar x+ \sum_{t'=0}^T \hat x^{t'}$ satisfies $f(x)=f(\bar x)$. We also have that $x\in\Phi$ so $x$ is optimal. 
\end{proof}

\begin{remark}
The conclusion of Theorem~\ref{thm:exist} still holds if $f$ is coercive in the sense that $\{x\,|\,f^\infty(x,s)\le 0\}=\{0\}$. In fact, the proof then simplifies considerably. A more general condition that covers both the condition of Theorem~\ref{thm:exist} as well as the coercivity condition is that there is a subset $J$ of the indices $\{0,\ldots,T\}$ such that 
\[
\{x\,|\,f^\infty(x,s)\le 0\}=\{x\,|\,\sum_{t\in J}x_t=0,\ x_t=0\quad \forall t\notin J\}
\]
for all $s\in S$.
\end{remark}

\begin{remark}
When $f$ is given by \eqref{fgh}, we have
\[
f^\infty(x,s) = \sum_{t=0}^Tg_t^\infty(x_t,s_t)+h^\infty(-\sum_{t=0}^Tx_t,s),
\]
by \cite[Theorems~9.3 and 9.5]{roc70a} as soon as $f$ is proper.
\end{remark}

The following lemma was used in the proof of Theorem~\ref{thm:exist}. For $T=1$, more general results can be found e.g.~in \cite{fg97}; see also \cite{rt97}.

\begin{lemma}\label{lem:css}
If $(x^\nu)\subset\sum_{t=0}^T \L^0(\F_t,\prod_{t=0}^T\mu_t)$ converges $\prod_{t=0}^T\mu_t$-almost everywhere, then there exists $\mu_t$-almost everywhere converging sequences $((\tilde x^\nu)^t)\subset\L^0(\B(S_t),\mu_t)$ such that $\sum (\tilde x^\nu)^t=x^\nu$.
\end{lemma}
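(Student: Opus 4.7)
The idea is to build the new decomposition by sampling $x^\nu$ along the coordinate axes through a carefully chosen base point. I would first pick Borel representatives $\hat y^\nu_t \in \L^0(\B(S_t),\mu_t)$ with
\[
x^\nu(s)=\sum_{t=0}^T \hat y^\nu_t(s_t)
\]
holding pointwise on $S=\prod_{t=0}^T S_t$. Such representatives exist because, with $\mu=\prod_{t=0}^T\mu_t$, an $\F_t$-measurable function on $(S,\mu)$ is (by Doob--Dynkin) of the form $\hat y^\nu_t\circ\pi_t$ for some $\B(S_t)$-measurable $\hat y^\nu_t$, and the pushforward of $\mu$ by $\pi_t$ is $\mu_t$, so this correspondence descends to $\L^0$.

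Let $A\subseteq S$ be the Borel set of full $\mu$-measure on which $x^\nu$ converges. For each $t$, Fubini's theorem implies that the $t$-th slice
\[
A^t_{\bar s_{-t}}:=\{s_t\in S_t \mid (\bar s_0,\dots,\bar s_{t-1},s_t,\bar s_{t+1},\dots,\bar s_T)\in A\}
\]
has full $\mu_t$-measure for $\prod_{r\neq t}\mu_r$-a.e.\ choice of $\bar s_{-t}=(\bar s_r)_{r\neq t}$. Consequently, the set of $\bar s\in A$ for which this full-measure property holds for every $t=0,\dots,T$ is itself of full $\mu$-measure and, in particular, nonempty. Fix any such $\bar s$.

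With $\bar s$ chosen, define
\[
(\tilde x^\nu)^t(s_t) := x^\nu(\bar s_0,\dots,\bar s_{t-1},s_t,\bar s_{t+1},\dots,\bar s_T) - \tfrac{T}{T+1}\, x^\nu(\bar s).
\]
Each $(\tilde x^\nu)^t$ is Borel measurable in $s_t$. The first term converges for $s_t\in A^t_{\bar s_{-t}}$, a set of full $\mu_t$-measure, and the second term is a convergent sequence of real numbers since $\bar s\in A$; hence $(\tilde x^\nu)^t$ converges $\mu_t$-a.e. Using the pointwise identity
\[
x^\nu(\bar s_0,\dots,s_t,\dots,\bar s_T)=\hat y^\nu_t(s_t)+x^\nu(\bar s)-\hat y^\nu_t(\bar s_t),
\]
the terms $\hat y^\nu_t(\bar s_t)$ telescope on summation to give $\sum_t\hat y^\nu_t(\bar s_t)=x^\nu(\bar s)$, whence $\sum_{t=0}^T(\tilde x^\nu)^t(s_t) = x^\nu(s)$ for every $s\in S$.

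The main obstacle is the joint selection of a single base point $\bar s$ satisfying both convergence of $x^\nu(\bar s)$ and the full-measure slice property for every $t$; this is the only place where the product structure $\mu=\prod_{t=0}^T\mu_t$ is genuinely used, via an iterated Fubini argument. Once $\bar s$ is in hand, the remainder is essentially algebraic: the prefactor $T/(T+1)$ is exactly what is needed to make the additive corrections telescope.
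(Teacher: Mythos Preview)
Your argument is correct and takes a genuinely different route from the paper. The paper proceeds by induction on $T$: it fixes a single anchor $\bar s_T\in S_T$ so that the slice $\{s^{T-1}:(s^{T-1},\bar s_T)\in A\}$ has full $\prod_{t<T}\mu_t$-measure, shifts the constant $(x^\nu)^T(\bar s_T)$ from the $T$-th summand into the $(T{-}1)$-th, applies the induction hypothesis to the resulting $(T{-}1)$-fold sum, and finally recovers $\mu_T$-a.e.\ convergence of the last piece as a difference. Your approach instead selects a single base point $\bar s\in A$ whose coordinate-axis slices are simultaneously full, and reads off all $T{+}1$ components at once by evaluating $x^\nu$ along those axes, with the $\tfrac{T}{T+1}\,x^\nu(\bar s)$ correction arranged to telescope. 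This is more direct and arguably cleaner: it avoids the recursive bookkeeping and isolates the role of the product structure (the iterated Fubini argument producing $\bar s$) in one step. The inductive proof, on the other hand, needs only one-dimensional Fubini at each stage. One small point of phrasing: the equality $x^\nu(s)=\sum_t\hat y^\nu_t(s_t)$ is not given pointwise a priori; rather, you are \emph{choosing} such a pointwise Borel representative of each $x^\nu$ (which is legitimate and leaves the a.e.\ convergence set $A$ of full measure), after which the conclusion $\sum_t(\tilde x^\nu)^t=x^\nu$ holds everywhere for this representative and hence in $\L^0(\mu)$ as required.
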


\begin{proof}
The statement is clearly valid for $T=0$. We proceed by induction on $T$. Let $(\sum_{t=0}^Tx^\nu_t)$ be a converging sequence in $\sum_{t=0}^T \L^0(\F_t,\prod_{t=0}^T\mu_t)$ and let $A\subseteq S$ be the set where the convergence holds. 

Let $A_T(s_T):=\{s^{T-1}\,|\,(s^{T-1},s_T)\in A\}$. Since $(\prod_{t=0}^T \mu_t)(A)=1$, we have $\mu_T(\bar A_T)=1$, where $\bar A_T:=\{s_T\,|\,\prod_{t=0}^{T-1} \mu_t(A_T(s_T))=1\}$. Let $\bar s_T\in\bar A_T$,
\begin{align*}
  (\tilde x^\nu)^T(s_T)&=(x^\nu)^T(s_T)-(x^\nu)^T(\bar s_T),\\
  (\tilde x^\nu)^{T-1}(s_{T-1}) &=(x^\nu)^{T-1}(s_{T-1})+(x^\nu)^T(\bar s_T),\\
  (\tilde x^\nu)^t&=x^\nu_t\qquad t=0,\ldots T-2,
\end{align*}
so that $\sum_t(x^\nu)^t = \sum_t(\tilde x^\nu)^t$. We have that $\sum_{t=0}^{T-1}(\tilde x^\nu)^t$ converges $\prod_{t=0}^{T-1}\mu_t$-almost everywhere and, by the induction hypothesis, there exist $\mu_t$-almost everywhere converging sequences $((\hat x^\nu)^t)\subset\L^0(\B(S_t),\mu_t)$ such that $\sum_{t=0}^{T-1}(\hat x^\nu)^t=\sum_{t=0}^{T-1}(\tilde x^\nu)^t$. We also get that
\[
\sum_{t=0}^T(x^\nu)^t -\sum_{t=0}^{T-1}(\tilde x^\nu)^t =(\tilde x^\nu)^T 
\]
converges $\mu_T$-almost everywhere. This completes the induction argument.
\end{proof}

\section{Applications to problems with fixed marginals}\label{sec:mt}

This section illustrates the results of the previous sections in the case of fixed marginals. More precisely, will assume throughout that $d=1$ (the measures are scalar-valued), and that $G_t$ and $H$ are given in terms of integral functionals with $g_t(x_t,s_t)=x_t$ for each $t$ and $h(\cdot,s)$ nondecreasing. In this case, $g_t^*(\cdot,s_t)=\delta_{\{1\}}$ and $\dom H^*\subset C^*_+$ so the assumptions of Lemma~\ref{lem:radon} are satisfied and problem \eqref{d} can be written as
\begin{equation}\label{mt}
\begin{alignedat}{2}
&\minimize\quad & & H^*(\lambda) \quad\ovr\ \lambda\in M\\
&\st\quad & &\lambda_t=\mu_t\quad t=0,\ldots,T,
\end{alignedat}
\end{equation}
while the relaxed primal \eqref{pr} problem becomes
\begin{equation}\label{mtpr}
\minimize\quad \int_S\left[\sum_{t=0}^Tx_t + h(-\sum_{t=0}^T x_t)\right]d\mu \quad\ovr\quad x\in\Phi,
\end{equation}
where
\[
\Phi = \{x\in\prod_{t=0}^T\L^0_t\,|\, -\sum_{t=0}^T x_t\in D\quad (\mu_t)_{t=0}^T\text{-a.e.}\}.
\]

Combining Theorems~\ref{thm:dual}, \ref{thm:oc} and \ref{thm:exist} gives the following.

\begin{theorem}\label{thm:mt}
Assume that $h$ satisfies the assumptions of Theorem~\ref{thm:if}, that $h^*(v,\cdot)$ is $\mu$-integrable for $v\in\reals$ in a neighborhood of $1$ and that either $\mu=\prod_{t=0}^T\mu_t$ or $T=1$ and $\prod_{t=0}^T \mu_t\ll\mu$. Then the optima in \eqref{mt} and \eqref{mtpr} are attained, there is no duality gap and feasible solutions $x$ and $\lambda$ are optimal if and only if
\begin{align*}
d\lambda^a/d\mu &\in\partial h(-\sum_{t=0}^T x_t)\quad\mu\text{-a.e.},\\
d\lambda^s/d|\lambda^s| &\in N_D(-\sum_{t=0}^T x_t)\quad|\lambda^s|\text{-a.e.}
\end{align*}
If, in addition, $\mu=\prod_{t=0}^T\mu_t$, then $x_t\in\L^1_t$ for each feasible $x$ in \eqref{mtpr}.
\end{theorem}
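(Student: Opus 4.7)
The plan is to verify the hypotheses of Theorems~\ref{thm:dual}, \ref{thm:oc} and \ref{thm:exist} in turn and read off the conclusion. With $g_t(x_t,s_t)=x_t$ one has $g_t^*=\delta_{\{1\}}$ and $G_t^*=\delta_{\{\mu_t\}}$, so the dual \eqref{d} reduces to \eqref{mt} and the subdifferential condition $d\lambda_t/d\mu_t\in\partial g_t(x_t)=\{1\}$ is just the feasibility $\lambda_t=\mu_t$; monotonicity of $h(\cdot,s)$ gives $\dom h^*(\cdot,s)\subset[0,\infty)$, hence $\dom H^*\subset C^*_+$ and Lemma~\ref{lem:radon} yields $\dom\varphi^*\subset M$. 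For Theorem~\ref{thm:dual} I would show $0\in\inte\dom\varphi$ directly: writing $\dom\varphi=\dom H+\Psi$ with $\Psi:=\{\sum_tx_t\comp\pi_t\,|\,x_t\in C_t\}$, using that $H$ is finite at some $u_0\in C$ (by Theorem~\ref{thm:if}'s continuity assumption) and nondecreasing (the downward-closed $D(s)=(-\infty,b(s)]$ makes $\dom H$ downward-closed in $C$), any $u\in C$ with $\|u\|_C\le 1/2$ satisfies $u-(\|u_0\|_C+1)\psi\le u_0$ and hence $u\in\dom H+\Psi$. Theorem~\ref{thm:dual} then gives attainment in \eqref{mt}, the identity $\inf\eqref{p}=-\inf\eqref{mt}$, and the stated optimality conditions on $\lambda^a$ and $\lambda^s$.

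To pass from \eqref{p} to the relaxation \eqref{mtpr} via Theorem~\ref{thm:oc}, I would verify Assumption~\ref{ass1}. With $\bar v\equiv 1$ in Lemma~\ref{lem:ass1}, convexity of $h^*(\cdot,s)$ turns the integrability of $h^*(v,\cdot)$ on a neighborhood of $1$ into a single $\L^1$-majorant $\beta$ for $h^*(1+v,\cdot)$ with $|v|\le\delta$, while $g_t^*(1,s_t)=0\le\beta$; this secures the first part of Assumption~\ref{ass1}. For the second part: when $\mu=\prod_t\mu_t$, Lemma~\ref{lem:ass1} gives $x_t\in\L^1_t$ and, since $\lambda_t=\mu_t$ and $\lambda\ge 0$ (because $\dom h^*\subset[0,\infty)$ and $N_D\subset[0,\infty)$ force both $\lambda^a,\lambda^s\ge 0$), both sides of Assumption~\ref{ass1}'s displayed equation collapse to $\sum_t\int x_t\,d\mu_t$; when $T=1$, Lemma~\ref{lem:perp} derives the second part directly from the first. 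Theorem~\ref{thm:oc} then supplies $\inf\eqref{mt}\le\inf\eqref{mtpr}\le\inf\eqref{p}$, which combined with the previous paragraph yields $\inf\eqref{mtpr}=-\inf\eqref{mt}$ and the stated subdifferential characterization of optimal primal--dual pairs.

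The hard part is attainment in \eqref{mtpr}, which I would obtain from Theorem~\ref{thm:exist} applied to $f(x,s):=\sum_tx_t+h(-\sum_tx_t,s)$. The absolute continuity $\prod_t\mu_t\ll\mu$ holds in both cases, and Fenchel at $v=1$ yields the $\L^1(\mu)$ lower bound $f(x,s)\ge -h^*(1,s)$. The main obstacle is the recession condition $\{x\,:\,f^\infty(x,s)\le 0\}=N$. Writing $f^\infty(x,s)=\sum_tx_t+h^\infty(-\sum_tx_t,s)$ and using the identity $h^\infty(\cdot,s)=\sigma_{\dom h^*(\cdot,s)}$, the integrability hypothesis forces $(1-\delta,1+\delta)\subset\dom h^*(\cdot,s)$ for $\mu$-a.e.\ $s$, whence $h^\infty(v,s)\ge v+\delta|v|$ and $f^\infty(x,s)\ge\delta|\sum_tx_t|$ on a set of full measure; the reverse inclusion $N\subset\{f^\infty\le 0\}$ is automatic from $h^\infty(0,s)=0$. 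Redefining $h$ on the residual $\mu$-null set to enforce the recession identity pointwise (this alters neither \eqref{mt} nor \eqref{mtpr}), Theorem~\ref{thm:exist} produces a minimizer of \eqref{mtpr}. The final $\L^1_t$-claim is the second conclusion of Lemma~\ref{lem:ass1}.
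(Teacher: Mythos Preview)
Your proposal is correct and follows essentially the same route as the paper: Theorem~\ref{thm:dual} (via $0\in\inte\dom\varphi$, which the paper invokes through Remark~\ref{rem:cq}) for dual attainment, Lemma~\ref{lem:ass1} and Lemma~\ref{lem:perp} for Assumption~\ref{ass1}, Theorem~\ref{thm:oc} for the optimality conditions, and Theorem~\ref{thm:exist} with the Fenchel lower bound $f\ge -h^*(1,\cdot)$ and the recession estimate $f^\infty(x,s)\ge\epsilon|\sum_t x_t|$ for primal attainment. Your extra care about \eqref{eq:exist} holding only $\mu$-a.e.\ is a point the paper glosses over; note, though, that for this particular $f$ the inclusion $N\subset\{f^\infty(\cdot,s)\le 0\}$ holds for \emph{every} $s$ (since $f(\cdot,s)$ depends only on $\sum_t x_t$), which already gives constancy of $f(\cdot,s)$ along $N$ and $N$-invariance of $\cl\dom f(\cdot,s)$ everywhere, while the reverse inclusion is only needed where the boundedness argument is applied, i.e.\ $\mu$-a.e.---so the redefinition is harmless but not strictly necessary.
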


\begin{proof}
Since $\dom G_t=C_t$ for all $t$ and $H$ is nondecreasing, Theorem~\ref{thm:dual} implies that the optimum in \eqref{mt} is attained and that there is no duality gap. To prove the attainment in \eqref{mtpr}, we apply Theorem~\ref{thm:exist} with
\[
f(x,s)=\sum_{t=0}^Tx_t + h(-\sum_{t=0}^Tx_t,s).
\]
Assumption~\ref{ass2} holds trivially since $\dom g_t=\reals^d$ for each $t$, so \eqref{mtpr} coincides with~\eqref{pf}. By the Fenchel inequality,
\begin{equation}\label{fenchel}
f(x,s) \ge (1-v) \sum_{t=0}^Tx_t - h^*(v,s), 
\end{equation}
so the integrability condition implies that the lower bound in Theorem~\ref{thm:exist} holds with $m(s)= h^*(1,s)$. This also gives
\[
f^\infty(x,s)\ge (1-v)\sum_{t=0}^Tx_t
\]
for $v$ in a neighborhood of $1$, so $f^\infty(x,s)\ge\epsilon|\sum_{t=0}^Tx_t|$ for some $\epsilon>0$. It follows that $f$ satisfies \eqref{eq:exist}. Thus, by Theorem~\ref{thm:exist}, the optimum in \eqref{mtpr} is attained. 

By Lemma~\ref{lem:ass1}, the integrability condition implies that the first part of Assumption~\ref{ass1} holds. If $T=1$, Lemma~\ref{lem:perp} implies that the second part of Assumption~\ref{ass1} is satisfied as well. If, on the other hand, $\mu=\prod_{t=0}^T\mu_t$, then, by Lemma~\ref{lem:ass1}, $x_t\in\L^1_t$ and the second part of Assumption~\ref{ass1} is again holds. The rest now follows from Theorem~\ref{thm:oc} by observing that, when $g_t(x_t,s_t)=x_t$, the condition $d\lambda_t/d\mu_t\in\partial g_t(x_t)$ simply means that $\lambda_t=\mu_t$.
\end{proof}

\subsection{Monge--Kantorovich problem}\label{ssec:linear}

Let $c$ be a measurable function on $S$ and let $h(u,s)=\delta_{(-\infty,c(s)]}(u)$. In this case, 
\[
h^*(v,s)=
\begin{cases}
c(s)v & \text{if $v\ge 0$},\\
+\infty & \text{otherwise}
\end{cases}
\]
and problem \eqref{mt} can be written as
\begin{equation}\label{mk}
\begin{alignedat}{2}
&\minimize\quad & & \int_S cd\lambda \quad\ovr\ \lambda\in M_+\\
&\st\quad & &\lambda_t=\mu_t\quad t=0,\ldots,T.
\end{alignedat}
\end{equation}
When $T=1$, we recover the classical Monge--Kantorovich mass transportation problem; see e.g.\ \cite{acbbv3}, \cite{vil9}, \cite{leo6}, \cite{rr98} and their references. On the other hand, if $S_t$ coincide for all $t$, problem \eqref{mk} can be interpreted as the problem of finding a stochastic process $X=(X_t)_{t=0}^T$ such that $X_t$ has distribution $\mu_t$ and the expectation of $c(X)$ is minimized. It should be noted that \eqref{mk} depends on $\mu$ only through its marginals $\mu_t$. Thus, we choose 
\[
\mu=\prod_{t=0}^T\mu_t.
\]

Problem~\eqref{mtpr} becomes
\begin{equation}\label{mkpr}
\begin{alignedat}{2}
&\minimize\quad & & \int_S\sum_{t=0}^T x_td\mu\quad\ovr\quad x\in\Phi,
\end{alignedat}
\end{equation}
where
\[
\Phi = \{x\in\prod_{t=0}^T\L^0_t\,|\, -\sum_{t=0}^T x_t\le c\quad (\mu_t)_{t=0}^T\text{-a.e.}\}.
\]
Indeed, by Lemma~\ref{lem:mut}, $x\in\Phi$ implies $-\sum_{t=0}^Tx_t\in D$ $\mu$-almost everywhere so
\[
\int_S\left[\sum_{t=0}^Tx_t + h(-\sum_{t=0}^T x_t)\right]d\mu = \int_S\sum_{t=0}^T x_td\mu.
\]

\begin{theorem}
Assume that $c$ is lower semicontinuous and $\mu$-integrable with $c\ge K\psi$ for some $K\in\reals$. Then the optima in \eqref{mk} and \eqref{mkpr} are attained, there is no duality gap and feasible solutions $\lambda$ and $x$ are optimal if and only if
\[
\int_S\left(c+\sum_{t=0}^Tx_t\right)d\lambda=0.
\]
Moreover, if $x$ is feasible in \eqref{mkpr}, then $x_t\in\L^1_t$ so the objective of \eqref{mkpr} can be written as
\[
\int_S\sum_{t=0}^T x_td\mu = \sum_{t=0}^T\int_{S_t} x_td\mu_t.
\]
\end{theorem}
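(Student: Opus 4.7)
The plan is to apply Theorem~\ref{thm:mt} with $h(u,s):=\delta_{(-\infty,c(s)]}(u)$ and then read off the compact identity using the marginal constraint $\lambda_t=\mu_t$.

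First I would verify the hypotheses of Theorem~\ref{thm:mt}. The set-valued mapping $D(s)=(-\infty,c(s)]$ is inner semicontinuous because $c$ is lsc: for an open $O\subset\reals$ with infimum $\alpha$, either $\alpha=-\infty$ and $D^{-1}(O)=S$, or $\alpha$ is finite and $D^{-1}(O)=\{s\mid c(s)>\alpha\}$, which is open. We have $\dom H=C(D)$ by construction, and $H$ is finite and continuous at $u_0:=(K-1)\psi$ because $c-u_0\ge\psi\ge 1$ places the unit ball about $u_0$ in the $\psi$-weighted norm inside $C(D)$, on which $H$ vanishes. This verifies the hypotheses of Theorem~\ref{thm:if}. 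The conjugate $h^*(v,\cdot)=cv$ for $v\ge 0$ (and $+\infty$ otherwise) is $\mu$-integrable uniformly on any bounded neighborhood of $1$, and $\mu=\prod_{t=0}^T\mu_t$ by our choice.

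Theorem~\ref{thm:mt} then yields attainment in both problems, no duality gap, and $x_t\in\L^1_t$ for every feasible $x$ in \eqref{mkpr}; Fubini gives the factorization $\int_S\sum_t x_t\, d\mu=\sum_t\int_{S_t} x_t\, d\mu_t$. To obtain the compact optimality condition, I would combine $\lambda_t=\mu_t$ with $x_t\in\L^1_t$ to get, for every feasible pair,
\[
\int_S x_t\, d\lambda=\int_{S_t} x_t\, d\lambda_t=\int_{S_t} x_t\, d\mu_t=\int_S x_t\, d\mu,
\]
and hence
\[
\int_S\Bigl(c+\sum_{t=0}^T x_t\Bigr) d\lambda=\int_S c\, d\lambda+\int_S\sum_{t=0}^T x_t\, d\mu.
\]
By no duality gap (Theorem~\ref{thm:oc}), $\inf\eqref{mk}+\inf\eqref{mkpr}=0$, so the right-hand side is nonnegative for every feasible pair and vanishes precisely when $\lambda$ and $x$ attain their respective infima.

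I expect the main obstacle to be verifying the inner semicontinuity of $D$ and locating a continuity point of $H$ in the $\psi$-weighted norm; once Theorem~\ref{thm:mt} applies, the translation to the integral identity is essentially automatic via the marginal identity.
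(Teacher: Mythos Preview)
Your proposal is correct and follows essentially the same route as the paper: verify that $h(u,s)=\delta_{(-\infty,c(s)]}(u)$ meets the hypotheses of Theorem~\ref{thm:mt} (inner semicontinuity of $D$ from lower semicontinuity of $c$, a continuity point of $H$ from the bound $c\ge K\psi$, integrability of $h^*(v,\cdot)=cv$ near $v=1$, and $\mu=\prod_t\mu_t$), then read off attainment, absence of a duality gap, and $x_t\in\L^1_t$.

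The only difference is in the final translation of the optimality condition. The paper observes that here $\partial h=N_D$, so the two pointwise conditions of Theorem~\ref{thm:mt} collapse to the single statement that $\lambda$ is concentrated on $\{c+\sum_t x_t=0\}$, which is the integral identity. You instead use the marginal equality $\lambda_t=\mu_t$ together with $x_t\in\L^1_t$ to rewrite $\int_S(c+\sum_t x_t)\,d\lambda$ as the sum of the two objective values, and then invoke $\inf\eqref{mk}+\inf\eqref{mkpr}=0$. Both arguments are short and equivalent; yours makes the link to weak duality explicit, while the paper's makes the complementary-slackness structure explicit.
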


\begin{proof}
We now have $D(s)=\{u\in\reals\,|\,u\le c(s)\}$ which is inner semicontinuous if and only if $c$ is lower semicontinuous; see \cite[Example~1.2*]{mic56}. The lower bound on $c$ implies that $h$ satisfies the assumptions of Theorem~\ref{thm:if}. Since $c$ is $\mu$-integrable, all the conditions of Theorem~\ref{thm:mt} are satisfied. The form of the optimality conditions follows simply by observing that now, $\partial h=N_D$.
\end{proof}

Instead of the lower bound $c\ge K\psi$, \cite[Theorem~5.10]{vil9} assumes the existence of $c_t\in\L^1_t$ such that $c\ge\sum_tc_t$. However, if there is no $K\in\reals$ such that $c\ge K\psi$, then problem \eqref{p} is infeasible so the duality argument fails and, in particular, the first conclusion of \cite[Theorem~5.10]{vil9} does not hold. The function $c$ is integrable, in particular, if there exist $c_t\in\L^1_t$ such that $c\le\sum_tc_t$. This latter condition is assumed e.g.\ in \cite[Theorem~5.10]{vil9} in establishing the existence of solutions.


\begin{remark}
Feasibility of an $x$ means that the inequality constraint holds on a product set $A^x=A^x_0\times\cdots\times A^x_T$, where $\mu_t(A^x_t)=1$. Thus, every dual feasible solution $\lambda$ satisfies
\[
\lambda((A^x)^c)\le\sum_{t=0}^T\lambda_t((A^x_t)^c) = \sum_{t=0}^T\mu_t((A^x_t)^c)=0.
\]
The optimality conditions thus imply that the optimal dual solutions $\lambda$ are supported by the sets 
\[
\Gamma_x:=\{s\in A^x\,|\,c(s)+\sum_{t=0}^Tx_t(s_t)\le 0\},
\]
where $x$ runs through optimal primal solutions. 
 The sets $\Gamma_x$ are {\em $c$-monotone} in the sense that 
\[
\sum_{i=1}^n c(s^i_0,\dots,s^i_T)\le \sum_{i=1}^n c(s^{P_0(i)}_0,\dots,s^{P_T(i)}_T)
\]
for any $(s^i_0,\dots,s^i_T)\in \Gamma_x$, $i=1,\dots n$ and any permutations $P_t$ of the indices $i$. Indeed,
\[
\sum_{i=1}^n c(s^i_0,\dots,s^i_T) \le -\sum_{i=1}^n\sum_{t=0}^T x_t(s^i_t)= -\sum_{i=1}^n\sum_{t=0}^T x_t(s^{P_t(i)}_t) \le \sum_{i=1}^n c(s^{P_0(i)}_0,\dots,s^{P_T(i)}_T),
\]
where the last inequality follows from the feasibility of $x$ on $A^x$. This is a multivariate generalization of the $c$-cyclical monotonicity property studied e.g.\ in \cite{rr98} and \cite{vil9}. When $T=1$, it is known that a feasible $\lambda$ is optimal if it is concentrated on a $c$-monotone set. It would be natural to conjecture that this holds also for $T>1$.
\end{remark}

\subsection{Capacity constraints}\label{ssec:nonlinear}

Let $c$ and $\phi$ be nonnegative measurable functions on $S$ and let
\[
h(u,s)=
\phi(s)[u-c(s)]^+.
\]
We get
\[
h^*(v,s)=
c(s)v + \delta_{[0,\phi(s)]}(v)
\]
so problem \eqref{mt} can be written as
\begin{equation}\label{cc}
\begin{alignedat}{2}
&\minimize\quad & & \int_Scd\lambda \quad\ovr\ \lambda\in M_+\\
&\st\quad & &\lambda\ll\mu,\quad\frac{d\lambda}{d\mu}\le\phi,\quad\lambda_t=\mu_t\quad t=0,\ldots,T.
\end{alignedat}
\end{equation}
This models {\em capacity constraints} on the transport plan requiring $\lambda\le\phi\mu$, where the inequality is taken with respect to the natural order on $M$. Constrained variations of the Monge--Kantorovich problem are considered also in \cite[Chapter~7]{rr98}. What is called ``capacity constraints'' in \cite[Section~7.3]{rr98}, however, is different from the constraints of \eqref{cc}. In the case of finite $S$, problem~\eqref{cc} reduces to a network flow problem where the flow on each arc of the network is bounded from above by the value of $\phi$; see \cite{roc84} for a comprehensive study of linear and nonlinear network flow problems. 

Problem \eqref{mtpr} becomes
\begin{equation}\label{ccpr}
\begin{alignedat}{2}
&\minimize\quad & &\int_S\left[\sum_{t=0}^T x_t + \phi[\sum_{t=0}^Tx_t+c]^-\right]d\mu\quad\ovr\quad x\in\Phi,
\end{alignedat}
\end{equation}
where
\[
\Phi = \prod_{t=0}^T\L^0_t. 
\]
Theorem~\ref{thm:mt} gives the following.

\begin{theorem}\label{thm:cc}
Assume that $\mu=\prod_{t=0}^T\mu_t$ and that $c$ and $\phi$ are $\mu$-integrable with $c\ge K\psi$ and $\phi\ge v$ for some $K\in\reals$ and $v>1$. Then the optima in \eqref{cc} and \eqref{ccpr} are attained, there is no duality gap and feasible solutions $\lambda$ and $x$ are optimal if and only if
\begin{align*}
d\lambda/d\mu &= 0\quad\text{if}\quad -\sum_{t=0}^T x_t < c, \\
d\lambda/d\mu &\in [0,\phi]\quad \text{if}\quad -\sum_{t=0}^T x_t = c, \\
d\lambda/d\mu &=\phi\quad \text{if}\quad -\sum_{t=0}^T x_t > c.
\end{align*}
Moreover, if $x$ is feasible in \eqref{ccpr}, then $x_t\in\L^1_t$.
\end{theorem}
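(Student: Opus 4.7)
\medskip
\noindent\textbf{Proof proposal.} The plan is to reduce the statement to Theorem~\ref{thm:mt} by verifying each of its hypotheses in turn for the specific normal integrand $h(u,s)=\phi(s)[u-c(s)]^+$ and then translating the abstract optimality conditions it delivers into the explicit pointwise form stated in the theorem.

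First I would check the assumptions of Theorem~\ref{thm:if} for $h$. Since $h$ is finite-valued on $\reals\times S$, the domain is $D(s)=\reals$, which is trivially inner semicontinuous, and $\cl\dom H=C(D)=C$. For the continuity hypothesis, the bound $c\ge K\psi$ is used as follows: the constant-multiple-of-$\psi$ function $\bar u:=-(|K|+1)\psi$ lies in $C$, satisfies $\bar u-c\le -\psi<0$ uniformly on $S$, and the strict inequality is preserved for every $u$ in a small $\|\cdot\|_C$-ball around $\bar u$, on which consequently $H\equiv 0$. Next, a direct Legendre calculation yields
\[
h^*(v,s)=c(s)v+\delta_{[0,\phi(s)]}(v),
\]
and the hypothesis $\phi\ge v>1$ implies that for any $\epsilon\in(0,v-1)$ and $v'\in[1-\epsilon,1+\epsilon]$, one has $v'\in[0,\phi(s)]$ for every $s$, so $h^*(v',\cdot)=v'c(\cdot)$ is $\mu$-integrable by the integrability of $c$. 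The remaining hypothesis $\mu=\prod_t\mu_t$ is assumed outright.

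With these checks done, Theorem~\ref{thm:mt} yields attainment in both \eqref{cc} and \eqref{ccpr}, the absence of duality gap, the integrability $x_t\in\L^1_t$ for feasible $x$ in \eqref{ccpr}, and the optimality characterization in terms of $\partial h$ and $N_D$. Because $D(s)=\reals$, its normal cone is $\{0\}$, so the singular condition $d\lambda^s/d|\lambda^s|\in N_D(-\sum_t x_t)$ forces $\lambda^s=0$, i.e.\ $\lambda\ll\mu$. A straightforward subdifferential computation for the piecewise-affine map $u\mapsto\phi(s)[u-c(s)]^+$ gives $\partial h(u,s)=\{0\}$ when $u<c(s)$, $\partial h(u,s)=[0,\phi(s)]$ when $u=c(s)$, and $\partial h(u,s)=\{\phi(s)\}$ when $u>c(s)$, which applied to $u=-\sum_t x_t$ produces exactly the three cases in the statement.

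The main obstacle is the continuity assumption of Theorem~\ref{thm:if}: it is the one hypothesis whose verification is not a direct reading off of the data $(c,\phi)$, and it is precisely the point where the lower bound $c\ge K\psi$ enters. Everything else, in particular the handling of the capacity-induced box-constraint that makes $\dom h^*(\cdot,s)=[0,\phi(s)]$ bounded above, is routine once the neighborhood $1\in\inte[0,\phi(s)]$ is available uniformly in $s$, which is exactly what $\phi\ge v>1$ provides.
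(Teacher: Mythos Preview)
Your proposal is correct and follows exactly the route the paper intends: the paper simply records ``Theorem~\ref{thm:mt} gives the following'' before stating Theorem~\ref{thm:cc}, and your write-up supplies precisely the hypothesis checks (inner semicontinuity of $D\equiv\reals$, continuity of $H$ via the lower bound $c\ge K\psi$, integrability of $h^*(v',\cdot)=v'c$ for $v'$ near $1$ via $\phi\ge v>1$) and the translation of the abstract optimality conditions into the stated three cases that the paper leaves implicit.
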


In the case of finite $S$, the optimality conditions in Theorem~\ref{thm:cc} correspond to the classical complementary slackness conditions in constrained network optimization problems; see \cite{roc84}.

\subsection{Schr\"odinger problem}\label{ssec:sch}

We now return to the Schr\"odinger problem
\begin{equation*}\label{sp}
\begin{alignedat}{2}
&\minimize\quad & & \int_S\ln(d\lambda/dR)d\lambda \quad\ovr\ \lambda\in M_+\\
&\st\quad & &\lambda\ll R,\quad\lambda_t=\mu_t\quad t=0,\ldots,T
\end{alignedat}
\end{equation*}
studied in Section~\ref{ssec:sch0}. We will derive optimality conditions and a dual problem under the assumption that there exists a feasible $\lambda$ equivalent to $R$. Denoting the feasible point by $\mu$ and $\phi:=d\mu/dR$, the problem can then be written as
\begin{equation*}
\begin{alignedat}{2}
&\minimize\quad & & \int_S\frac{d\lambda}{d\mu}\ln(\phi\frac{d\lambda}{d\mu})d\mu  \quad\ovr\ \lambda\in M_+\\
&\st\quad & &\lambda\ll \mu,\quad\lambda_t=\mu_t\quad t=0,\ldots,T.
\end{alignedat}
\end{equation*}
This fits the format of \eqref{mt} with $h(u,s)=\frac{e^u-1}{\phi(s)}$. Indeed, we have  
\[
h^*(v,s)=
\begin{cases}
  v\ln(\phi(s)v) - v +1/\phi(s) & \text{if $v >0$},\\
  0 &\text{if $v=0$},\\
  +\infty &\text{otherwise},
\end{cases}
\]
so that $(h^*)^\infty(\cdot,s)=\delta_{\{0\}}$ for all $s\in S$ and
\[
H^*(\lambda)=
\begin{cases}
  \int_S\frac{d\lambda}{d\mu}\ln(\phi\frac{d\lambda}{d\mu})d\mu & \text{if $\lambda\ll\mu$},\\
+\infty & \text{otherwise}.
\end{cases}
\]
The relaxed primal problem becomes
\begin{equation*}\label{sppr}
\begin{alignedat}{2}
&\minimize\quad & & \int_S\left[\sum_{t=0}^Tx_t + \frac{\exp(-\sum_{t=0}^Tx_t)-1}{\phi}\right]d\mu\quad\ovr\quad x\in\prod_{t=0}^T\L^0_t.
\end{alignedat}
\end{equation*}
Note that even when restricted to $x\in\prod C_t$, the objective is different from that in Theorem~\ref{thm:spd}.

Theorem~\ref{thm:mt} gives the following.

\begin{theorem}\label{thm:sp}
Assume that $T=1$, $\prod_{t=0}^T\mu_t\ll R$ and that \eqref{sp} admits a feasible solution equivalent to $R$. Then the optimum in \eqref{sp} is attained and the optimal solutions $\lambda$ are characterized by the existence of an $x\in\prod_{t=0}^T\L^0_t$ such that
\begin{align*}
d\lambda/dR &=\exp(-\sum_{t=0}^Tx_t)\quad R\text{-a.e.}.
\end{align*}
If $\prod_{t=0}^T\mu_t$ is feasible and equivalent to $R$, then the same conclusion holds for any $T$ and, moreover, $x_t\in\L^1_t$ for feasible $x$ in \eqref{sppr}.
\end{theorem}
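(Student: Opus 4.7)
The plan is to reduce Theorem~\ref{thm:sp} to an application of Theorem~\ref{thm:mt} via the reformulation of \eqref{sp} given in Section~\ref{ssec:sch}. When $T=1$, I take $\mu$ to be the feasible solution equivalent to $R$ whose existence is assumed; in the second case, I take $\mu=\prod_{t=0}^T\mu_t$, which by hypothesis is itself a feasible solution equivalent to $R$. In either case, set $\phi:=d\mu/dR$ and $h(u,s):=(e^u-1)/\phi(s)$, so that, as computed in Section~\ref{ssec:sch}, the objective of \eqref{sp} becomes $H^*$ and the relaxed primal \eqref{pr} becomes \eqref{sppr}. To guarantee the topological prerequisites of Theorem~\ref{thm:if}, I work with $\psi\equiv 1$, so that $C=C_b(S)$ and $C_t=C_b(S_t)$.

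Next I verify the hypotheses of Theorem~\ref{thm:mt}. Since $\dom h(\cdot,s)=\reals$, the map $D(s)=\reals$ is trivially isc, $C(D)=C$, and the change of variable $d\mu=\phi\,dR$ gives $H(u)=\int (e^u-1)/\phi\,d\mu=\int e^u\,dR-1$, which is finite on the whole Banach space $C_b(S)$ (since bounded $u$ gives bounded $e^u$); a convex function finite on a Banach space is automatically continuous there, so $H$ is continuous at $u=0$ and Theorem~\ref{thm:if} applies. Using $h^*(v,s)=v\ln\phi(s)+v\ln v-v+1/\phi(s)$ for $v>0$, integrability of $h^*(v,\cdot)$ follows from the fact that $\int\ln\phi\,d\mu$ is the relative entropy of $\mu$ with respect to $R$ (finite by feasibility of $\mu$) and $\int 1/\phi\,d\mu=R(S)=1$; hence $h^*(v,\cdot)\in\L^1(\mu)$ for every $v>0$, in particular near $v=1$. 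Finally, the measure condition of Theorem~\ref{thm:mt} holds: in the first case $\prod_{t=0}^T\mu_t\ll R\sim\mu$, while in the second $\mu=\prod\mu_t$ by construction.

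Theorem~\ref{thm:mt} then delivers attainment of both problems, absence of a duality gap, and the optimality conditions $d\lambda^a/d\mu\in\partial h(-\sum_{t=0}^T x_t)$ $\mu$-a.e.\ and $d\lambda^s/d|\lambda^s|\in N_D(-\sum_{t=0}^T x_t)$ $|\lambda^s|$-a.e. Because $D=\reals$, we have $N_D\equiv\{0\}$, forcing $\lambda^s=0$ and $\lambda\ll\mu\sim R$; and because $h$ is differentiable with $\partial_u h(u,s)=e^u/\phi(s)$, the first condition reads $d\lambda/d\mu=\exp(-\sum_{t=0}^T x_t)/\phi$, which upon multiplying by $\phi=d\mu/dR$ yields the claimed $d\lambda/dR=\exp(-\sum_{t=0}^T x_t)$ $R$-a.e. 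The converse implication---that any feasible $\lambda$ of this form is optimal---is the ``if'' direction of the same optimality conditions. In the second case, the additional conclusion $x_t\in\L^1_t$ for feasible $x$ in \eqref{sppr} is provided by the final clause of Theorem~\ref{thm:mt}. The main delicate point is the continuity of $H$ on $C$: choosing $\psi\equiv 1$ is essential, since for unbounded $\psi$ one would need an exponential moment condition $\int e^{\varepsilon\psi}\,dR<\infty$, which is not available from the hypotheses.
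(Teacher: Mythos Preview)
Your proof is correct and follows essentially the same approach as the paper's: you reduce to Theorem~\ref{thm:mt} by choosing $\mu$ to be the assumed feasible solution equivalent to $R$ (or $\prod_t\mu_t$ in the second case), verify the integrability hypothesis via finiteness of the entropy of $\mu$ relative to $R$, and then translate the optimality condition $d\lambda/d\mu=\exp(-\sum_t x_t)/\phi$ back to $R$. Your treatment is more explicit than the paper's in two useful respects: you spell out why the continuity hypothesis on $H$ in Theorem~\ref{thm:if} is met (by taking $\psi$ constant so that $C=C_b$ and $H$ is finite everywhere), and you argue directly that $\lambda^s=0$ via $N_D\equiv\{0\}$, whereas the paper simply reads this off from the form of $H^*$.
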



\begin{proof}
Since $\mu\approx R$, the condition $\prod_{t=0}^T\mu_t\ll R$ means that $\prod_{t=0}^T\mu_t\ll\mu$. The feasibility of $\mu$ in \eqref{sp} (and the definition of $\phi$) implies that the integrability condition in Theorem~\ref{thm:mt} is satisfied. It is clear that $h$ satisfies the other conditions as well. The optimality conditions mean that $\lambda\approx\mu$ and
\[
\frac{d\lambda}{d\mu}=\frac{\exp(-\sum_{t=0}^Tx_t)}{\phi}\quad\mu\text{-a.e.}
\]
which reduces to the one in the statement since $\mu\approx R$ and $\phi=d\mu/dR$.
\end{proof}

The necessity and sufficiency of the optimality condition in Theorem~\ref{thm:sp} was established for feasible solutions equivalent to $R$ in \cite[Theorem~3.43]{fg97} under the assumption that $R\ll\prod_{t=0}^T\mu_t$. Theorem~\ref{thm:sp} above gives the equivalence when $\prod_{t=0}^T\mu_t\ll R$ without assuming apriori the equivalence with $R$. 

The last statement of Theorem~\ref{thm:sp} seems new. An alternative condition for the integrability of $x_t$ is given in \cite[Proposition~1]{rt93}. Example~1 of \cite{rt93} shows that the integrability may fail without additional conditions.



\section{Appendix}

In this appendix we prove Theorem~\ref{thm:if}. The proof follows the arguments in \cite{per17} which in turn are based on those in \cite{roc71} and \cite{per14}. We reproduce the proofs here since we allow for unbounded scaling functions $\psi_t$ and we do not assume that $S$ is locally compact.

Let $h$ be a convex normal $\B(S)$-integrand on $\reals^d$, $\mu$ a nonnegative Radon measure on $S$ and let 
\[
I_h(u)=\int h(u)d\mu.
\]

\begin{theorem}\label{thm:app1}
If $I_h$ is finite and continuous at some point on $C$, then $I_h$ is lsc and $I_h^*$ is proper and given by
\[
I_h^*(\lambda)=\min_{\lambda'\in M}\{I_{h^*}(d\lambda'/d\mu)+\sigma_{\dom I_h}(\lambda-\lambda')\mid \lambda'\ll \mu\}.
\]
\end{theorem}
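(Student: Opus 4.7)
The plan is to follow Rockafellar's approach \cite{roc71} to conjugates of integral functionals on spaces of continuous functions, adapted to the non-locally-compact weighted setting as in \cite{per17}. The proof splits into three parts: lower semicontinuity of $I_h$, the easy $(\le)$ direction of the conjugate formula, and the reverse direction together with attainment of the minimum.

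For the first part, $I_h$ is proper by finiteness at a point. For lower semicontinuity, continuity at $u_0$ combined with normality of $h$ yields a measurable selection $v(s)\in\partial h(u_0(s),s)$ with $h^*(v(\cdot),\cdot)\in\L^1(\mu)$ and $v\in\L^1(\psi\mu;\reals^d)$, whence by Fenchel the pointwise bound $h(x,s)\ge x\cdot v(s)-h^*(v(s),s)$ serves as an integrable minorant of $h(u^\nu(s),s)$ uniformly along any norm-convergent sequence $u^\nu\to u$ in $C$. Combined with the pointwise lsc of $h(\cdot,s)$ and Fatou's lemma, this yields $I_h(u)\le\liminf_\nu I_h(u^\nu)$. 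Properness of $I_h^*$ then follows from the biconjugate theorem.

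The easy direction is a Fenchel computation. For $\lambda'\in M$ with $\lambda'\ll\mu$ and $v:=d\lambda'/d\mu$, the pointwise Fenchel inequality $u(s)\cdot v(s)\le h(u(s),s)+h^*(v(s),s)$ integrates to $\langle u,\lambda'\rangle\le I_h(u)+I_{h^*}(v)$ for $u\in\dom I_h$. Writing
\[
\langle u,\lambda\rangle-I_h(u)=\langle u,\lambda-\lambda'\rangle+\langle u,\lambda'\rangle-I_h(u)\le\sigma_{\dom I_h}(\lambda-\lambda')+I_{h^*}(v)
\]
and taking supremum over $u$ gives $I_h^*(\lambda)\le I_{h^*}(v)+\sigma_{\dom I_h}(\lambda-\lambda')$.

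The reverse direction with attainment is the main technical step. The strategy is to enlarge the function space to $\L^\infty_\psi(\mu):=\{u\in\L^0\mid u/\psi\in\L^\infty(\mu)\}$, extend $\lambda\in C^*$ by Hahn-Banach to a functional $\tilde\lambda$ dominated by a translate of $I_h$ regarded as a functional on $\L^\infty_\psi(\mu)$, so that $I_h^*(\lambda)=I_{\tilde h}^*(\tilde\lambda)$. Applying Yosida-Hewitt decomposes $\tilde\lambda=\lambda'+\tilde\lambda^s$ into a countably additive part $\lambda'\ll\mu$ in $M$ and a purely finitely additive residual, and the classical conjugate formula for integral functionals on $\L^\infty$-type spaces gives $I_{\tilde h}^*(\tilde\lambda)=I_{h^*}(d\lambda'/d\mu)+\sigma_{\dom I_h}(\tilde\lambda^s)$. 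Restricting back to $C$ identifies $\tilde\lambda^s|_C$ with $\lambda-\lambda'$ in $C^*$ and delivers attainment at $\lambda'$. The main obstacle is handling the weighted, non-compact setting: the Yosida-Hewitt machinery and Rockafellar's conjugate formula need to be verified on $\L^\infty_\psi(\mu)$ with unbounded $\psi$ and merely Polish $S$, and a Lusin-type approximation of elements of $\dom I_h\cap\L^\infty_\psi(\mu)$ by continuous functions in $\dom I_h\cap C$ is required to show that the support function on $\L^\infty_\psi(\mu)$ restricts correctly to $\sigma_{\dom I_h}$ on $C^*$.
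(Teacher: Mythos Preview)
Your approach is essentially the same as the paper's: factor $I_h=\bar I_h\circ A$ through the embedding $A:C\to L^\infty(\mu)$, invoke Rockafellar's $L^\infty$ conjugate formula with the Yosida--Hewitt decomposition, and then pull the result back to $C^*$. The paper packages the Hahn--Banach step and the pull-back as two applications of the composition-conjugate theorem \cite[Theorem~19]{roc74}, but the content is the same as your explicit extension argument.

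One obstacle you list is not actually there. You write that a Lusin-type approximation of $\dom\bar I_h$ by continuous functions in $\dom I_h$ is needed to identify the support functions. It is not: for the $(\ge)$ direction you only need the trivial inequality
\[
\sigma_{\dom I_h}(\lambda-\lambda')=\sup_{u\in\dom I_h}\langle Au,\tilde\lambda^s\rangle\le\sup_{v\in\dom\bar I_h}\langle v,\tilde\lambda^s\rangle=\sigma_{\dom\bar I_h}(\tilde\lambda^s),
\]
since $A(\dom I_h)\subset\dom\bar I_h$ and $\tilde\lambda^s|_C=\lambda-\lambda'$. Combining this with your Hahn--Banach equality $I_h^*(\lambda)=\bar I_h^*(\tilde\lambda)=I_{h^*}(d\lambda'/d\mu)+\sigma_{\dom\bar I_h}(\tilde\lambda^s)$ and the easy $(\le)$ direction forces all inequalities to be equalities and delivers attainment at $\lambda'=\tilde\lambda^a$. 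The paper likewise avoids any Lusin argument here, handling the support-function identification by a second application of \cite[Theorem~19]{roc74} to $\delta_{\dom I_h}=\delta_{\dom\bar I_h}\circ A$. The Lusin machinery enters only in the next result, Theorem~\ref{thm:app2}, where one genuinely needs to approximate measurable selections of $D$ by continuous ones.

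A minor point: the paper sidesteps the weighted-space issue by proving the appendix results for $C_b$ (so $A$ lands in ordinary $L^\infty$) and reducing the general $C$ to $C_b$ by the scaling $u\mapsto u/\psi$ only at the very end, in the proof of Theorem~\ref{thm:if}. This makes the Yosida--Hewitt and Rockafellar citations apply verbatim without the extra verification you anticipate.
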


\begin{proof}


Defining the convex function $\bar I_h$ to $L^\infty$ by 
\[
\bar I_h(u)=\int h(u)d\mu,
\]
we have $I_h=\bar I_h\comp A$, where $A:C\rightarrow L^\infty(\mu)$ is the natural embedding. We equip $L^\infty$ with the essential supremum-norm. By \cite[Theorem~2]{roc71}, the continuity of $I_h$ at a point $\bar u$ implies that $\bar I_h$ is proper and continuous at $A \bar u$. Thus, by \cite[Theorem~19]{roc74},
\begin{align*}
I_h^*(\lambda)&=\inf_{\theta\in (L^\infty)^*}\{\bar I^*_h(\theta) \mid A^*\theta =\lambda\}.
\end{align*}
By \cite[Theorem~1]{roc71}, the conjugate of $\bar I_h$ on $(L^\infty)^*$ can be expressed in terms of the Yosida-Hewitt decomposition $\theta=\theta^a+\theta^s$ as
\[
\bar I_h^*(\theta)=I_{h^*}(d\theta^a/d\mu)+\sigma_{\dom \bar I_h}(\theta^s).
\]
We thus get
\begin{align}\label{eq:roc71a}
I_h^*(\lambda)&=\inf_{\theta\in (L^\infty)^*}\{I_{h^*}(d\theta^a/d\mu)+\sigma_{\dom \bar I_h}(\theta^s) \mid A^*(\theta^a+\theta^s) =\lambda\}.
\end{align}

It suffices to show that
\begin{align}\label{eq:roc71}
I_h^*(\lambda) &=  \inf_{\tilde\theta\in (L^\infty)^*,\theta^a\ll\mu} \{I_{h^*}(d\theta^a/d\mu) + \sigma_{\dom \bar I_h}(\tilde\theta)\mid A^*(\theta^a+\tilde\theta)=  \lambda\}.
\end{align}
Indeed, the formula in the statement follows by writing this as
\begin{align*}
I_h^*(\lambda)&=\inf_{\theta^a\ll\mu}\left\{I_{h^*}(d\theta^a/d\mu) + \inf_{\tilde \theta\in (L^\infty)^*}\{\sigma_{\dom \bar I_h}(\tilde\theta)\mid A^*\tilde \theta=\lambda-A^*\theta^a \}\right\},
\end{align*}
and using the expression
\[
\sigma_{\dom I_h}(\lambda-A^*\theta^a)=\inf_{\tilde \theta\in (L^\infty)^*}\{\sigma_{\dom \bar I_h}(\tilde \theta) \mid A^*\tilde\theta =\lambda-A^*\theta^a\},
\]
which is obtained by applying \cite[Theorem~19]{roc74} to the function $\delta_{\dom I_h}=\delta_{\dom \bar I_h}\comp A$.

To prove \eqref{eq:roc71}, let $\tilde\theta\in (L^\infty)^*$ such that $A^*(\theta^a+\tilde\theta)=  \lambda$. For any $u\in C$,
\begin{align*}
\langle u,\lambda \rangle -I_h(u) &=  \langle Au,\theta^a\rangle - \bar I_h(Au) + \langle u,A^*\tilde\theta\rangle,
\end{align*} 
so taking supremum over $u\in\dom I_h$ gives
\[
I_h^*(\lambda)\le I_{h^*}(d\theta^a/d\mu)+ \sigma_{\dom\bar I_h}(\tilde\theta).
\]
Minimizing over $\tilde\theta\in L^\infty(S)^*$ and $\theta^a\ll\mu$ such that $A^*(\theta^a+\tilde\theta)=  \lambda$ gives
\begin{align*}
I_h^*(\lambda) &\le  \inf_{\tilde\theta\in (L^\infty)^*,\theta^a\ll\mu} \{I_{h^*}(d\theta^a/d\mu) + \sigma_{\dom\bar I_h}(\tilde\theta)\mid A^*(\theta^a+\tilde\theta)=  \lambda\}.
\end{align*}
The reverse inequality follows by noting that if we restrict $\tilde\theta$ to be purely singular with respect to $\mu$, we obtain the right hand side of \eqref{eq:roc71a}. 
\end{proof}

\begin{theorem}\label{thm:app2}
If $D$ is isc and $C(D)\ne\emptyset$, then for each $\lambda\in M$,
\[
\sigma_{C_b(D)}(\lambda)=\int (h^*)^\infty(d\lambda/d|\lambda|)d|\lambda|.
\]
\end{theorem}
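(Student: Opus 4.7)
The equality rewrites, via the standard identity $(h^*)^\infty(\cdot,s)=\sigma_{D(s)}$ (valid since $D(s)=\cl\dom h(\cdot,s)$ and $h^*(\cdot,s)$ is closed proper convex, so its recession function is the support function of the closure of the domain of $h(\cdot,s)$), as $\sigma_{C_b(D)}(\lambda)=\int_S\sigma_{D(s)}(v(s))\,d|\lambda|(s)$, where $v:=d\lambda/d|\lambda|$ is the polar decomposition density. My plan is to prove the two inequalities separately; the $\le$ direction is immediate, and the $\ge$ direction carries essentially all the content.

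For $\le$, any $u\in C_b(D)$ satisfies $u(s)\in D(s)$ for every $s\in S$, which gives the pointwise bound $u(s)\cdot v(s)\le\sigma_{D(s)}(v(s))$. Integrating against $|\lambda|$ yields $\langle u,\lambda\rangle\le\int\sigma_D(v)\,d|\lambda|$, and taking the supremum over $u\in C_b(D)$ gives the direction.

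For $\ge$, the plan is, given $\alpha<\int\sigma_D(v)\,d|\lambda|$, to construct $u\in C_b(D)$ with $\langle u,\lambda\rangle>\alpha$. This is done in two layers. Layer one: using $\B(S)$-measurability of $v$ and the normal integrand $\sigma_{D(\cdot)}$, apply a measurable selection theorem (e.g.\ \cite[Theorem~14.37]{rw98}) to pick a Borel selection $\bar u$ of $D$ with $\bar u(s)\cdot v(s)\ge \min\{\sigma_{D(s)}(v(s)),n\}-\epsilon$ $|\lambda|$-a.e.; letting $n\to\infty$ handles the case where $\sigma_D(v)=+\infty$ on a set of positive $|\lambda|$-measure by driving $\langle\bar u,\lambda\rangle\to+\infty$. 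Layer two: approximate $\bar u$ by a bounded continuous selection of $D$. Since $D$ is isc with nonempty closed convex values on the Polish (hence paracompact) space $S$ and $C(D)\ne\emptyset$, Michael's selection theorem provides a continuous Castaing representation of $D$. Applied to the still-isc map $s\mapsto D(s)\cap(\bar u(s)+\delta\uball)\cap n\uball$ on a compact set of large $|\lambda|$-measure (via Lusin's theorem applied to $\bar u$), and extended to all of $S$ by a convex combination with a fixed $u_0\in C(D)$ after capping in norm, this yields the desired $u\in C_b(D)$. Dominated convergence against $|\lambda|$ then gives $\langle u,\lambda\rangle\ge\alpha-O(\epsilon)$, which closes the inequality by letting $\epsilon\downarrow 0$.

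The main obstacle is exactly this second layer: simultaneously preserving pointwise membership in $D$, continuity, boundedness, and closeness to $\bar u$ in the $|\lambda|$-weighted integral sense. The inner semicontinuity of $D$ and the nonemptiness of $C(D)$ are precisely the hypotheses that make Michael's theorem applicable and the continuous Castaing representation available, but interweaving it with the norm truncation (to land in $C_b$ rather than $C$) and with a Lusin-type regularization on a compact set, while $\sigma_D(v)$ may be only locally or not at all integrable, requires the careful layering above rather than a one-shot application.
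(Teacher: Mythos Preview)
Your plan is the same as the paper's: pointwise Fenchel for $\le$; for $\ge$, a measurable near-maximizer in $D$, Lusin regularization on a compact set, Michael's selection theorem to get a continuous selection of $D$ agreeing with the near-maximizer there, and a gluing step with a fixed bounded selection to land in $C_b(D)$. Two execution details differ and are worth flagging because your formulations could fail as written. First, the paper does not apply Michael to the intersection $s\mapsto D(s)\cap(\bar u(s)+\delta\uball)\cap n\uball$ (inner semicontinuity is not preserved under intersections in general, so the ``still-isc'' claim would need its own argument); instead it applies Michael to the piecewise map $\Gamma(s):=\{w(s)\}$ on the Lusin compact and $\Gamma(s):=D(s)$ off it, whose inner semicontinuity follows immediately from $w(s)\in D(s)$ and the isc of $D$. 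Second, boundedness is not obtained by ``capping in norm'' (a norm-truncation of a selection of $D$ need not remain in $D$) but by noting that the Michael extension is automatically bounded on an open neighborhood of the compact set and then forming a partition-of-unity convex combination with a fixed $\tilde y\in C_b(D)$.
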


\begin{proof}
By Fenchel's inequality, 
\begin{align}\label{eq:fen1}
\langle y,\lambda\rangle \le \int \sigma_{D}(d\lambda/d|\lambda|)d|\lambda|
\end{align}
for every $y\in C_b(D)$, so it suffices to show
\[
\sup_{y\in C_b(D)} \langle y,\lambda\rangle\ge \int \sigma_{S}(d\lambda/d|\lambda|)d|\lambda|.
\]
We have, by \cite[Theorem 14.60]{rw98}, 
\begin{align*}
  \sup_{w\in L^\infty(\lambda;D)} \int wd\lambda =\int \sigma_{D}(d\lambda/d|\lambda|)d|\lambda|.
\end{align*}
Let $\tilde y\in C_b(D)$,
\[
\alpha< \int \sigma_{D}(d\lambda/d|\lambda|)d|\lambda|
\]
and $w\in L^\infty(\lambda;S)$ be such that $\int wd\lambda>\alpha$. By  Lusin's theorem \cite[Theorem 7.1.13]{bog7}, there is an open $\tilde O\subset S$ such that $\int_{\tilde O} (|\tilde y|+ |w_t|)d|\lambda|<\epsilon/2$, $\tilde O^C$ is compact and $w$ is continuous relative to $\tilde O^C$. The mapping
\[
\Gamma(s)=
\begin{cases}
	w(s)\quad&\text{if } s\in \tilde O^C\\
	D(s)\quad&\text{if } s\in \tilde O
\end{cases}
\]
is isc convex closed nonempty-valued so that, by \cite[Theorem 3.1''\!']{mic56}, there is a continuous $\hat y$ on $S$ with $\hat y=w$ on $\tilde O^C$ and $\hat y\in D$ everywhere. Since $\hat y$ is continuous and bounded on $\tilde O^C$ which is compact, there is an open $\hat O$ such that $\hat y$ is bounded on $\hat O$. Since $\hat O^C$ is a countable intersection of open sets, we may choose $\hat O$ in a way that $\int_{\hat O\backslash \tilde O^C} |\hat y_t|d|\lambda|<\epsilon/2$. 

Since $\hat O$ and $\tilde O$ form an open cover of $T$ and since $T$ is normal, there is, by \cite[Theorem~36.1]{mun0}, a continuous partition of unity $(\hat \alpha,\tilde\alpha)$ subordinate to $(\hat O,\tilde O)$. Defining $y:=\hat \alpha \hat y+\tilde \alpha \tilde y$, we have $y\in C_b(D)$ and
\begin{align*}
  \int yd\lambda &\ge \int_{\tilde O^C} wd\lambda-\int_{\hat O\backslash \tilde O^C} \hat\alpha |\hat y|d|\lambda|-\int_{\tilde O}\tilde\alpha |\tilde y|d|\lambda| \ge\int \alpha-\epsilon,
\end{align*}
which finishes the proof of necessity, since $\alpha<\int \sigma_{S}(d\lambda/d|\lambda|)d|\lambda|$ was arbitrary.

\end{proof}

\begin{theorem}\label{thm:ifb}
Assume that $D(s):=\dom h(\cdot,s)$ is isc, $\cl\dom H=C_b(D)$ and that $H$ is finite and continuous at some $u\in C_b$. Then $H$ is a proper convex lsc function and the restriction to $M$ of its conjugate is given by
\[
H^*(\lambda) = \int_S h^*(d\lambda^a/d\mu)d\mu + \int_S(h^*)^\infty(d\lambda^s/d|\lambda^s|)d|\lambda^s|,
\]
where $\lambda^s$ is the singular part of $\lambda\in M$ in its Lebesgue decomposition with respect to $\mu$.
If $\dom H=C_b$, then $\dom H^*$ is contained in the set of Borel-measures absolutely continuous w.r.t.\ $\mu$. 
\end{theorem}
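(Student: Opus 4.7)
My plan is to decompose $H = I_h + \delta_{C_b(D)}$ and assemble $H^*$ from Theorems~\ref{thm:app1} and \ref{thm:app2} via Fenchel-Rockafellar duality for the sum. First I would verify that the hypotheses of Theorem~\ref{thm:app1} hold for $I_h$: since $H$ is finite and continuous at some $u$, a sufficiently small norm-neighbourhood of $u$ lies entirely in $\dom H \subseteq C_b(D)\cap \dom I_h$, so $H$ and $I_h$ coincide there and $I_h$ is itself finite-continuous at $u$. The indicator $\delta_{C_b(D)}$ is proper, convex and lsc because each $D(s)$ is closed convex. Continuity of $I_h$ at a point of $\dom\delta_{C_b(D)}$ supplies the standard qualification needed for Fenchel-Rockafellar duality, yielding
\[
H^*(\lambda) = \min_{\lambda_1+\lambda_2=\lambda}\{I_h^*(\lambda_1) + \sigma_{C_b(D)}(\lambda_2)\}
\]
with the infimum attained.

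The next step is to substitute the formula of Theorem~\ref{thm:app1} for $I_h^*(\lambda_1)$ and rearrange, obtaining a triple infimum over $\lambda_1$, $\lambda_2$, $\nu\ll\mu$. An upper bound $H^*(\lambda)\le \inf_{\nu\ll\mu}\{I_{h^*}(d\nu/d\mu) + \sigma_{C_b(D)}(\lambda-\nu)\}$ is immediate on specializing to $\lambda_1=\nu$, $\lambda_2=\lambda-\nu$. For the matching lower bound I would use the elementary estimate $\sigma_{\dom I_h}(\xi_1) + \sigma_{C_b(D)}(\xi_2) \ge \sigma_{\dom H}(\xi_1+\xi_2)$ followed by $\sigma_{\dom H} = \sigma_{C_b(D)}$, which holds because $\cl\dom H = C_b(D)$ and $\lambda \in M$ acts norm-continuously on $C_b$. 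Thus
\[
H^*(\lambda) = \min_{\nu\ll\mu}\{I_{h^*}(d\nu/d\mu) + \sigma_{C_b(D)}(\lambda-\nu)\}.
\]
Applying Theorem~\ref{thm:app2} to $\sigma_{C_b(D)}(\lambda-\nu)$ and using the Lebesgue decomposition $\lambda-\nu = (f-g)\mu + \lambda^s$ with $f=d\lambda^a/d\mu$, $g=d\nu/d\mu$ (together with positive homogeneity of $(h^*)^\infty$) reduces the quantity to minimize to $\int[h^*(g) + (h^*)^\infty(f-g)]d\mu + \int (h^*)^\infty(d\lambda^s/d|\lambda^s|)d|\lambda^s|$.

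The pointwise recession identity $h^*(f,s) \le h^*(g,s) + (h^*)^\infty(f-g,s)$, with equality at $g = f$, then collapses the $g$-infimum to $\int h^*(f)d\mu$ and yields the target formula. For the final claim, $\dom H = C_b$ forces $D(s) = \reals^d$ for every $s$ (test against constant functions), so $(h^*)^\infty(v,s) = \sigma_{D(s)}(v) = +\infty$ whenever $v\ne 0$; since $d\lambda^s/d|\lambda^s|$ has unit length $|\lambda^s|$-almost everywhere, $H^*(\lambda)<\infty$ forces $\lambda^s = 0$. The step I expect to be the main obstacle is reconciling the support function $\sigma_{\dom I_h}$ appearing in Theorem~\ref{thm:app1} with the $\sigma_{C_b(D)}$ coming from Theorem~\ref{thm:app2}; this is precisely where the closure hypothesis $\cl\dom H = C_b(D)$ is used.
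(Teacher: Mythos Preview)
Your approach is essentially the paper's: decompose $H=I_h+\delta_{C_b(D)}$, apply Fenchel--Rockafellar, substitute Theorem~\ref{thm:app1}, collapse to the intermediate formula
\[
H^*(\lambda) = \min_{\nu\ll\mu}\bigl\{I_{h^*}(d\nu/d\mu) + \sigma_{C_b(D)}(\lambda-\nu)\bigr\},
\]
then apply Theorem~\ref{thm:app2} and the recession identity. Your handling of the inner minimization via the direct estimate $\sigma_{\dom I_h}(\xi_1)+\sigma_{C_b(D)}(\xi_2)\ge\sigma_{\dom H}(\xi_1+\xi_2)=\sigma_{C_b(D)}(\xi_1+\xi_2)$ is slightly more elementary than the paper's (which invokes the infimal-convolution formula a second time), but both use $\cl\dom H=C_b(D)$ at exactly this point.

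There is one gap, in your argument for the final claim. The assertion is that $\dom H^*$---a subset of $C_b^*$---lies inside the Borel measures absolutely continuous with respect to $\mu$; in particular it includes the statement $\dom H^*\subseteq M$. Your argument invokes the integral formula for $H^*$, which you have only established for $\lambda\in M$, so it merely shows that $\lambda\in\dom H^*\cap M$ forces $\lambda^s=0$. The fix is immediate from your own intermediate formula, which holds for every $\lambda\in C_b^*$ (nothing in its derivation required $\lambda\in M$; the restriction to $M$ only enters when you invoke Theorem~\ref{thm:app2}). When $\dom H=C_b$ one has $C_b(D)=C_b$, so $\sigma_{C_b(D)}=\delta_{\{0\}}$ on all of $C_b^*$, and the intermediate formula forces $\lambda=\nu\ll\mu$. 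This is exactly how the paper argues the last claim.
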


\begin{proof}

Since $\inte\dom I_h\cap C_b(D)\ne\emptyset$, \cite[Theorem 20]{roc74}  gives
\begin{align*}
 H^*(\lambda)=(I_h+\delta_{C_b(D)})^*(\lambda) &=\min_{\lambda''}\{ I_h^*(\lambda-\lambda'')+\sigma_{C_b(D)}(\lambda'')\}
\end{align*}
 Thus, by Theorem~\ref{thm:app1},
\begin{align*}
&H^*(\lambda)\\
&=\min_{\lambda''}\{\min_{\lambda'}\{I_{h^*}(d\lambda'/d\mu)+\sigma_{\dom I_h}(\lambda-\lambda'-\lambda'')\mid \lambda'\ll \mu\}+\sigma_{C_b(D)}(\lambda'')\}\\
&=\min_{\lambda'}\left\{I_{h^*}(d\lambda'/d\mu)+\min_{\lambda''}\{\sigma_{\dom I_h}(\lambda-\lambda'-\lambda'')+\sigma_{C_b(D)}(\lambda'')\}\midb \lambda'\ll \mu\right\}.
\end{align*}
Since $\inte\dom I_h\cap C_b(D)\ne\emptyset$, \cite[Theorem 20]{roc74} again gives 
\begin{align*}
H^*(\lambda) &=\min_{\lambda''}\{ \sigma_{\dom I_h}(\lambda-\lambda'')+\sigma_{C(D)}(\lambda'')\}.
\end{align*}
Since, by assumption, $C_b(D)=\cl \dom H =\cl(\dom I_h\cap C_b(D))$, the left side equals $\sigma_{C_b(D)}(\lambda)$. Thus
\begin{align}\label{eq:prf}
H^*(\lambda)&=\min_{\lambda'}\{I_{h^*}(d\lambda'/d\mu)+\sigma_{C_b(D)}(\lambda-\lambda')\mid \lambda'\ll \mu\}.
\end{align}
For $\lambda\in M$, Theorem~\ref{thm:app2} now gives
\begin{align*}
H^*(\lambda)&=\min_{\lambda'}\{\int h^*(d\lambda'/d\mu)d\mu+\int (h^*)^\infty(d(\lambda-\lambda')/d\mu)d\mu\}\\
&\quad+\int (h^*)^\infty(d(\lambda^s)/d|\lambda^s|)d|\lambda^s|,
\end{align*}
By \cite[Corollary 8.5.1]{roc70a}, the last minimum is attained at $d\lambda'/d\mu=d\lambda/d\mu$, so the last expression equals $J_{h^*}(\lambda)$.
%
%

If $\dom H= C_b$, \eqref{eq:prf} implies the claim. 
\end{proof}

\begin{proof}[Proof of Theorem~\ref{thm:if}]
Defining $\tilde h(u,s)= h(\psi(s) u,s)$, $\tilde D(s)=\cl \dom \tilde h(s)$ and 
\[
\tilde H(u) := I_{\tilde h}(u)+\delta_{C_b(\tilde D)},
\]
on $C_b$, we get
\begin{align*}
H^*(\lambda) &=\sup_{u\in C}\{\langle u,\lambda\rangle -H(u)\}\\
&=\sup_{u\in C_b}\{ \langle u,\psi\lambda\rangle - H(\psi u)\}\\
&=\tilde H^*(\psi\lambda).
\end{align*}
Clearly, $\tilde D(s)=\{u \mid \psi(s)u \in \dom h(s)\}$. By \cite[Proposition 2.2]{mic56}, $D$ is isc if and only if $\tilde D$ is isc. It is thus clear that $H$ satisfies the assumptions of Theorem~\ref{thm:if} if and only if $\tilde H$ satisfies those of Theorem~\ref{thm:ifb}. Since $\tilde h^*(y,s)= h^*(y/\psi(s),s)$, an application of Theorem~\ref{thm:ifb} to $\tilde H^*(\psi\lambda)$ gives the expression for $H^*$ in the statement.

As to the subdifferential formulas, we have $\lambda\in\partial H(u)\cap M$ if and only if $H(u)+J_{h^*}(\lambda)=\langle u,\lambda\rangle$. For any $u\in\dom H$ and $\lambda\in M$, we have the Fenchel's inequalities
\begin{align*}
h(u)+h^*(d\lambda^a/d\mu)&\ge u\cdot(d\lambda^a/d\mu)\quad\mu\text{-a.e.,}\\
(h^*)^\infty(d\lambda^s/d|\lambda^s|) &\ge y\cdot (d\lambda^s/d|\lambda^s|)\quad|\lambda^s|\text{-a.e.},
\end{align*}
which hold as equalities if and only if $H(u)+J_{h^*}(\lambda)=\langle u,\lambda\rangle$. These equalities are equivalent to the given pointwise subdifferential conditions.

Since $\dom H^*=\{\lambda \in C^*\mid \psi\lambda \in \dom \tilde H^*\}$, the last claim follows from that of Theorem~\ref{thm:ifb}.
\end{proof}



\bibliographystyle{plain}
\bibliography{sp}

\end{document}